\newtheoremstyle{break}
  {9pt}
  {9pt}
  {\itshape}
  {}
  {\bfseries}
  {.}
  { }
  {}
\theoremstyle{break}
\newtheorem{thm}{Theorem}[section]
\newtheorem{prop}[thm]{Proposition}
\newtheorem{defn}[thm]{Definition}
\newtheorem{alg}[thm]{Algorithm}
\DeclareMathOperator*{\argmin}{arg\,\min\;}
\begin{document}
%
\title{A New Sparse and Robust Adaptive Lasso Estimator for the Independent Contamination Model}
%
%
%


\author{\IEEEauthorblockN{Jasin Machkour, Michael Muma,~\IEEEmembership{Member,~IEEE,} Bastian Alt, Abdelhak M. Zoubir,~\IEEEmembership{Fellow,~IEEE}}
\thanks{J. Machkour, M. Muma and A.M. Zoubir are with the Signal Processing Group Technische Universit\"at Darmstadt. Merckstr. 25, 64283 Darmstadt, Germany. Email: \{machkour,muma,zoubir\}@spg.tu-darmstadt.de. B. Alt is with the Bioinspired Communication Systems, Rundeturmstr. 12, 64283 Darmstadt, Germany. Email:  bastian.alt@bcs.tu-darmstadt.de}
}


%
%

\markboth{IEEE Transactions on Signal Processing,~Vol.~XX, No.~XX, XXX~XXXX}%
{}
%



\maketitle

\begin{abstract}
Many problems in signal processing require finding sparse solutions to under-determined, or ill-conditioned, linear systems of equations. When dealing with real-world data, the presence of outliers and impulsive noise must also be accounted for. In past decades, the vast majority of robust linear regression estimators has focused on robustness against rowwise contamination. Even so called `high breakdown' estimators rely on the assumption that a majority of rows of the regression matrix is not affected by outliers. Only very recently, the first cellwise robust regression estimation methods have been developed. In this paper, we define robust oracle properties, which an estimator must have in order to perform robust model selection for under-determined, or ill-conditioned linear regression models that are contaminated by cellwise outliers in the regression matrix. We propose and analyze a robustly weighted and adaptive Lasso type regularization term which takes into account cellwise outliers for model selection. The proposed regularization term is integrated into the objective function of the MM-estimator, which yields the proposed MM-Robust Weighted Adaptive Lasso (MM-RWAL), for which we prove that at least the weak robust oracle properties hold. A performance comparison to existing robust Lasso estimators is provided using Monte Carlo experiments. Further, the MM-RWAL is applied to determine the temporal releases of the European Tracer Experiment (ETEX) at the source location. This ill-conditioned linear inverse problem contains cellwise and rowwise outliers and is sparse both in the regression matrix and the parameter vector. The proposed RWAL penalty is not limited to the MM-estimator but can easily be integrated into the objective function of other robust estimators.
\end{abstract}

\begin{IEEEkeywords}
Sparse and Robust Estimation, Outlier, Lasso, Independent Contamination Model, Robust Oracle Properties, Atmospheric Emissions.
\end{IEEEkeywords}

%
\IEEEpeerreviewmaketitle

\section{Introduction}
\label{sec:introduction}
Many of today's signal processing problems can be formulated as a linear regression 
\begin{equation}
\mathbf{y}=\mathbf{X}\boldsymbol{\beta}+\mathbf{u},
\label{eqn:linear_regression_matrix}
\end{equation}
where we assume that the regressor matrix $\mathbf{X}\in\mathbb{R}^{n\times p}$, the errors $\mathbf{u}\in \mathbb{R}^{n\times 1}$ and observations $\mathbf{y}\in \mathbb{R}^{n\times 1}$ are independent and identically distributed (iid) random variables, $\boldsymbol{\beta}\in\mathbb{R}^{p\times 1}$ are the unknown parameters of interest, and $\mathbf{X}$ and $\mathbf{u}$ are mutually independent. 

The presence of outliers and impulsive noise has been reported in applications as diverse as wireless communication, ultrasonic systems, computer vision, electric power systems, automated detections of defects, biomedical signal analysis, genomics and the estimation of the temporal releases of a pollutant to the atmosphere. See  \cite{zoubir2012,elhamifar2013sparse,liu2013robust,pascal2014,pascal_shrinkage,martinez2014robust,marta_icassp15,so_2016,ollila2016} and references therein. Violations of the Gaussian assumption cause a drastic performance drop for the commonly used least-squares estimator (LSE) \cite{hampel2005, maronna2006, huber2009}
\begin{equation}
\hat{\boldsymbol{\beta}}_{\mathrm{LSE}}=\underset{\boldsymbol{\beta}}{\argmin}\, \|\textbf{y}-\mathbf{x}\boldsymbol{\beta}\|_{2}^{2}.
\label{eq:lse}
\end{equation}

For decades, the vast majority of robust linear regression estimators has focused on robustness against 'rowwise' contamination. Under this so-called Tukey-Huber contamination model (THCM) \cite{maronna2006}, a small fraction of rows of $\mathbf{X}$ may be contaminated. Even 'high-breakdown' regression estimators, such as the LTS-, S-, MM-, and $\tau$-estimators \cite{zoubir2012,maronna2006} rely on the THCM. In \cite{rousseeuw2016detecting}, Rousseuw and Van den Bossche state that {\it recently  researchers  have  come  to  realize  that  the  outlying  rows  paradigm  is no longer sufficient for modern high-dimensional data sets.  It often happens that most data cells (entries) in a row are regular and just a few of them are anomalous.} 

The case that independent cells of $\mathbf{X}$ are outliers is referred to as the independent contamination model (ICM) \cite{alqallaf2009propagation,ollerer2016shooting,leung2016robust}. Only very recently, the first `cellwise robust' regression estimation methods have been developed \cite{ollerer2016shooting,leung2016robust}. The extension of existing estimators to other contamination models, such as the ICM, and even the development of completely new robust estimators is necessary to solve many real-world problems. For example, the estimation of the spatio-temporal emissions of a pollutant, given noisy observations, can be formulated as a linear inverse problem with the help of an atmospheric dispersion model \cite{martinez2014robust}. The data of the European Tracer Experiment (ETEX) which was conducted in Monterfil, Brittany in 1994, where Perfluorocarbon (PFC) tracers were released into the atmosphere, for instance, contains both cellwise and rowwise outliers. 

Additionally to the robustness considerations, atmospheric inverse problems, like many other problems in signal processing, require finding sparse solutions to under-determined, or ill-conditioned, linear systems of equations. For example, handling large datasets in terms of model interpretation, including the case where the number of explanatory variables~$p$ is larger than the sample size~$n$, requires penalized estimators, such as the classical least absolute shrinkage and selection operator (Lasso) \cite{tibshirani1996regression}
\begin{equation}
\hat{\boldsymbol{\beta}}_{\mathrm{Lasso}}=\underset{\boldsymbol{\beta}}{\argmin}\, \|\textbf{y}-\mathbf{x}\boldsymbol{\beta}\|_{2}^{2}+\lambda\|\boldsymbol{\beta}\|_{1}.
\label{eq: lasso lagrange}
\end{equation}
with $\lambda\in\mathbb{R}^+$. 

Many other regularizations have been proposed \cite{friedman2010regularization,huang2012selective,NIPS2013_4904,zhang2012general,lian2016nonconvex}. In this paper the focus lies on Lasso estimation, to select a robust and interpretable model in high dimensional settings. 
Zou \cite{zou2006adaptive} showed that the Lasso variable selection can be inconsistent, so that the oracle properties do not hold and proposed the adaptive Lasso
\begin{equation}
\hat{\boldsymbol{\beta}}_{\mathrm{Lasso}}^{\mathrm{ad}}=\underset{\boldsymbol{\beta}}{\argmin}\, \|\textbf{y}-\mathbf{x}\boldsymbol{\beta}\|_{2}^{2}+\lambda\sum\limits_{j=1}^{p}\hat{w}_{j}|\beta_{j}|,
\label{eq: adaptive lasso}
\end{equation}
where $\hat{w}_{j}=1/|\hat{\beta}_j|^{\gamma}$ ($\gamma>0$) are non-negative weights depending on $\hat{\boldsymbol{\beta}}$, which is a $\sqrt{n}$-consistent estimator of $\boldsymbol{\beta}$. 


Just like the LSE, the Lasso and the adaptive Lasso rely on the Gaussian noise assumption and are sensitive to outliers. In recent years, some robust and regularized approaches have been proposed that replace the penalized square objective function by a penalized bounded objective function \cite{marta_icassp15,ollila2016,Maronna2011sridge,Alfons2013sparseLTS,smucler2015robust}. These methods, however, again, rely on the THCM, and to date, no penalized robust regression method exists that can handle cellwise and rowwise outliers.

\noindent {\it Original Contributions:} 
First, we give a weak and strong definition of what we call the 'robust oracle properties'. These are properties that estimators aiming at performing {\it robust} variable selection need to have. Next, we propose and analyze a robustly weighted and adaptive Lasso-type regularization term, which takes into account cellwise outliers for model selection. The proposed regularization term is integrated into the objective function of the MM-estimator, which yields the proposed \textit{MM-Robust Weighted Adaptive Lasso (MM-RWAL)}, for which we prove that at least the weak robust oracle properties hold. We would like to highlight, that the proposed Robust Weighted Adaptive Lasso penalty can easily be integrated into the objective function of other robust estimators. A performance comparison to existing robust Lasso estimators is provided using Monte Carlo experiments. Further, a challenging real-data application of estimating the sparse non-negative spatio-temporal emissions of a pollutant is considered, given noisy observations $\textbf{y}$ and an imprecisely estimated ill-conditioned and sparse dispersion model $\mathbf{X}$. This example contains both cellwise and rowwise outliers.

\noindent {\it Notation:}
Scalars are denoted by lowercase letters, e.g., $x$, column vectors by bold-faced lowercase letters, e.g. $\mathbf{x}$, matrices by bold-faced uppercase letters, e.g. $\mathbf{X}$, sets are denoted by calligraphic letters, e.g. $\mathcal{X}$ with associated cardinality $|\mathcal{X}|$. The $j$th column of a matrix $\mathbf{X}$ is denoted by $\mathbf{x}_j$ while $(\mathbf{x})_{i:j}$ denotes the vector that contains the entries $i$ to $j$ of vector~$\mathbf{x}$. The $i$th element of vector $\mathbf{x}$ is denoted by $x_i$, $\textbf{I}_{p}$ is the $p$-dimensional identity-matrix, $\mathbf{0}_p$ is the $p$-dimensional all-zeros vector and $\mathrm{diag}(\mathbf{x})$ forms a matrix that contains the entries of $\mathbf{x}$ as its diagonal. $\hat{\boldsymbol{\beta}}$ refers to the estimator (or estimate) of the parameter vector $\boldsymbol{\beta}$, $(\cdot)^\top$ is the transpose operator. The derivative of a function $f$ with respect to its argument is abbreviated by $f^{\prime}$. $P(X)$ is the probability of event $X$. $\mathrm{Bin}(1,\epsilon)$ denotes the binomial distribution with one trial and a success probability of $\epsilon$. Convergence to the normal distribution with mean vector $\boldsymbol{\mu}$ and covariance matrix $\boldsymbol{\Sigma}$ is denoted by $\overset{d\,\,}{\rightarrow}\mathcal{N}(\textbf{0},\boldsymbol{\Sigma})$.

\noindent {\it Organization:}
Section \ref{sec:THCM-ICM} discusses the Tukey-Huber and Independent Contamination models and motivates the use of cellwise robust methods. Section \ref{sec:prop-methods} defines the robust oracle properties, introduces the proposed estimator and provides algorithms to compute the estimates. Section \ref{sec:simulations} provides numerical experiments, while Section \ref{sec:real-data} contains a challenging real-data application of source estimation for an atmospheric inverse problem. Finally, Section \ref{sec:conclusion} concludes the paper with a brief outlook on future work.

\section{Tukey-Huber and Independent Contamination Model}
\label{sec:THCM-ICM}
\subsection{Tukey-Huber Contamination Model (THCM)}
The THCM is based on the assumption that a majority~${(1-\epsilon)}$ of the data points is not contaminated, while a minority $\epsilon$ is contaminated. 
The univariate THCM is formulated as
\begin{equation}
x=(1-b)y + bz,
\label{eq: THCM}
\end{equation}
where $b,\,y,\,z$ are mutually independent scalar random variables. Let $F$ be the true distribution of the data and let $G$ be an unspecified contaminating distribution, from which the outliers are generated. Then, with ${y\sim F}$, ${z\sim G}$ and ${b\sim \mathrm{Bin}(1,\epsilon)}$, the distribution of the observed variable $x$ becomes
\begin{equation}
H = (1-\epsilon)F + \epsilon G.
\end{equation}
The multivariate THCM is defined by
\begin{equation}
\mathbf{x}=(\textbf{I}_{p}-b\textbf{I}_{p})\textbf{y} + b\textbf{I}_{p}\textbf{z},
\label{eq: THCM multivariate}
\end{equation}
where $\textbf{y} ,\, \mathbf{x} ,\, \textbf{z} $ are $p$-dimensional random variables.

A highly valuable property of the THCM is that the percentage of contaminated rows in the data-matrix stays unchanged under affine transformations, that is, if the random vector $\mathbf{x}$ follows the THCM, then the affine transformed vector
\begin{equation}
\tilde{\mathbf{x}}=\textbf{Ax} + \textbf{b}
\end{equation}
also follows the THCM. Thus, estimators designed for the THCM can be affine equivariant. Additionally, many important robustness concepts such as the influence function and the breakdown point are based on the THCM \cite{hampel2005, maronna2006, huber2009, zoubir2012}. However, the disadvantages of this contamination model, which occur especially in high dimensional settings, are alerting. First of all, the assumption that a major fraction $(1-\epsilon)$ of the data points is outlier free, very unlikely holds in higher dimensions. A mathematical legitimation of this criticism will be given in Eq.~\eqref{eq:thcm-icm} and the associated Fig.~\ref{fig: ICM THCM}. Secondly, as illustrated in Fig.~\ref{fig: THCM illustration}a we see that a few cellwise outliers in the THCM lead to flagging the whole corresponding row of $\mathbf{X}$ as outliers. Although much information is lost, such contamination may be handled by high breakdown point estimators in low dimensions. However, Fig.~\ref{fig: THCM illustration}b illustrates that in cases where the number of predictors $p$ exceeds the number of rows $n$, it becomes more and more likely that a few highly contaminated predictors force the THCM-based estimators to flag all data points as outliers, which makes it impossible to draw any inferences from the data.
\begin{figure}[htbp]
\centering
\includegraphics[width=0.48\textwidth]{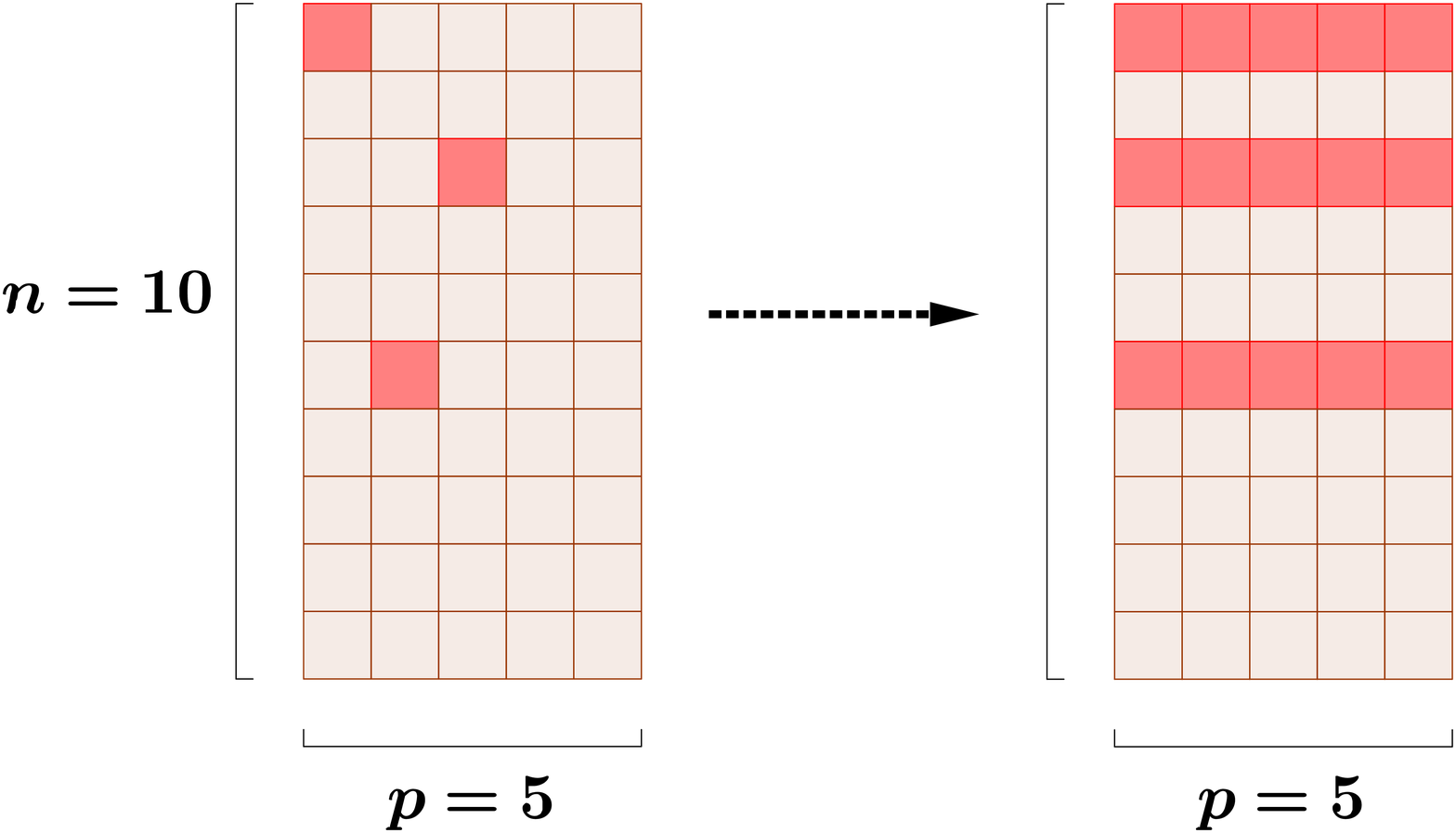}
\\
(a) $p<n.$\\
\includegraphics[width=0.28\textwidth]{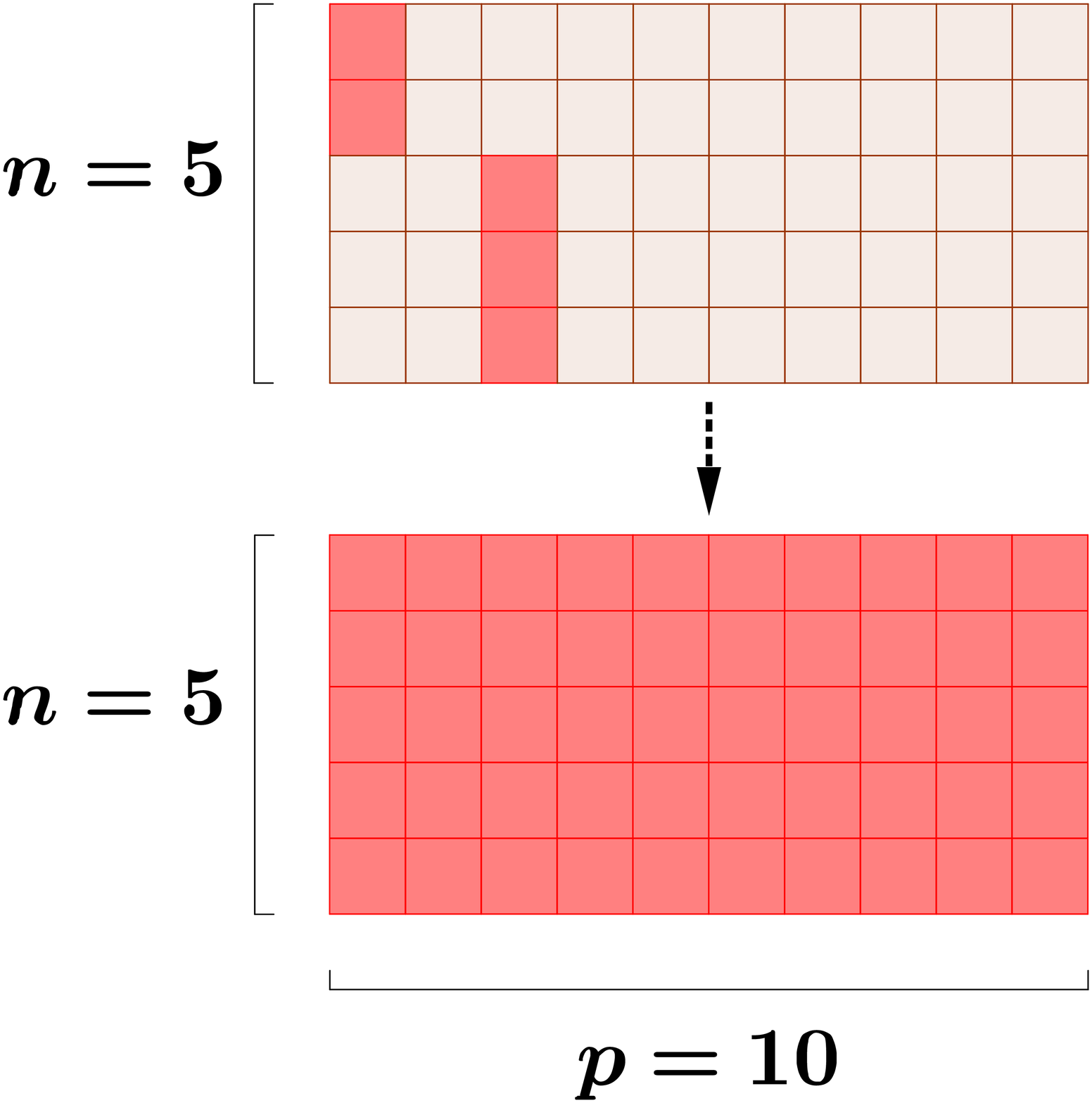}
\\
(b) $p>n.$
\caption{Illustration of how the THCM downweights outliers in the case of (a) a few cellwise outliers for $p<n$ and (b) a few highly contaminated predictors for $p<n$. }
\label{fig: THCM illustration}
\end{figure}

\subsection{Independent Contamination Model (ICM)}
The ICM is defined by 
\begin{equation}
\mathbf{x}=(\textbf{I}_{p}-\textbf{B})\textbf{y} + \textbf{B}\textbf{z},
\label{eq: ICM multivariate}
\end{equation}
where $\textbf{B}=\mathrm{diag}(b_{1}, b_{2},\ldots,b_{p})$ and $b_{1}, b_{2},\ldots,b_{p}$ are independent Bernoulli random variables with success probability $\epsilon_{j},\, j=1,\ldots,p$. Loosely speaking, each cell $x_{ij}$ corresponding to the predictor $j$ in every single row has a probability $\epsilon_{j}$ of being contaminated. Notice that for $P(b_{1}=b_{2}=\ldots=b_{p})=1$, the ICM reduces to the THCM.

The main issue with the ICM is that it is not equivariant under affine transformations. Let $\mathbf{x}$ be a random vector and $\textbf{A}$ an invertible quadratic $p$-dimensional matrix. Then, for an affine transformation of $\mathbf{x}$ 
\begin{align}
\begin{split}
\tilde{\mathbf{x}}=\textbf{Ax} + \textbf{b}&=\textbf{A}(\textbf{I}_{p}-\textbf{B})\textbf{y} + \textbf{ABz} + \textbf{b}\\
&\neq (\textbf{I}_{p}-\textbf{B})\textbf{Ay} + \textbf{BAz} + \textbf{b}.
\end{split}
\end{align}
Therefore, if $\textbf{AB}\neq\textbf{BA}$, $\tilde{\mathbf{x}}$ does not follow the ICM. The lack of affine equivariance has a far reaching consequence for the ICM, which is referred to as 'outlier propagation'. Outlier propagation means that an outlying cell in a predictor may spread over other components of the corresponding data point, e.g. by linearly combining the predictors in a regression model. From these considerations, we calculate the probability of a row in a $p$-dimensional dataset being contaminated by the formula
\begin{equation}
P_{\mathrm{cont, row}}=1-(1-\epsilon)^{p}.
\label{eq: probability row contamination}
\end{equation}  
So, for any high breakdown point estimator, tuned to have the highest possible breakdown point of 50\%, we obtain the inequality
\begin{align}
\begin{split}
P_{\mathrm{cont, row}}&<0.5\\
\Leftrightarrow 1-(1-\epsilon)^{p}&<0.5\\
\Leftrightarrow\qquad\qquad\,\,\,\,\epsilon&<1-0.5^{p}.
\end{split}
\end{align}
This means that the tolerable fraction of contamination $\epsilon$ in every predictor - for simplicity, we assume that $\epsilon$ is equal for every predictor - is bounded and depends on the dimension of the dataset. From this point of view, the probability of having only  THCM-outliers in the data depends on the dimension of the predictor-matrix and the number of rows as follows:
\begin{align}
&P("\mathrm{THCM}\leadsto \mathrm{ICM}")   \nonumber\\
&= \bigg[1\cdot{n\choose \lceil n\epsilon_{2}\rceil}\cdot{n\choose \lceil n\epsilon_{3}\rceil}\cdot\ldots\cdot{n\choose \lceil n\epsilon_{p}\rceil}\bigg]^{-1}\nonumber \\
&= \bigg[\dfrac{n!}{(n-n\epsilon_{2})!\cdot(n\epsilon_{2})!}\cdot \dfrac{n!}{(n-n\epsilon_{3})!\cdot(n\epsilon_{2})!}\nonumber \\
& \qquad \qquad \ldots\ \cdot \dfrac{n!}{(n-n\epsilon_{2})!\cdot(n\epsilon_{p})!}\bigg]^{-1}
\end{align}
Since the THCM assumes that $\epsilon_{1}=\ldots=\epsilon_{p}=\epsilon$, it follows that
\begin{align}
\label{eq:thcm-icm}
&P("\mathrm{THCM}\leadsto \mathrm{ICM}")   \nonumber\\
&= \bigg[\dfrac{n!}{(n-n\epsilon)!\cdot(n\epsilon)!}\bigg]^{-(p-1)}\nonumber \\
&= {n\choose \lceil n\epsilon\rceil}^{-(p-1)}.
\end{align}
Fig.~\ref{fig: ICM THCM} illustrates how rapidly the probability of having only THCM-outliers decreases for fixed $n$ and $\epsilon$. This, again supports the statement made in \cite{rousseeuw2016detecting} that the outlying  rows  paradigm  is no longer suffcient for modern high-dimensional data sets.
\begin{figure}[htbp]
\Large
\centering
\includegraphics[width=1\linewidth]{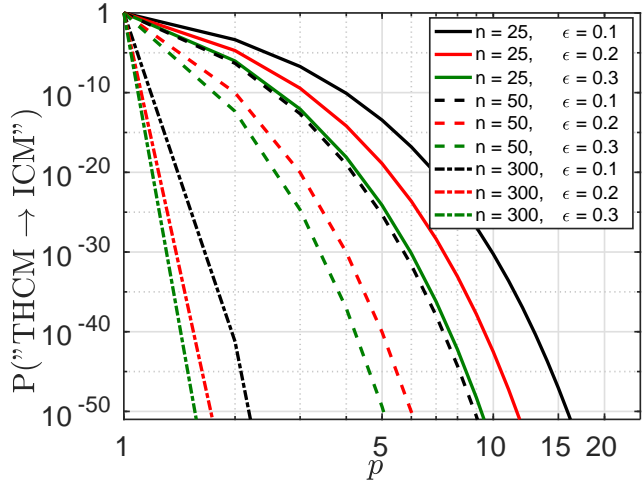}
\caption{Probability of having only THCM-outliers for different values of $n$ and $\epsilon$ as a function of $p$.}
\label{fig: ICM THCM}
\end{figure}


\section{Proposed Methods}
\label{sec:prop-methods}

\subsection{Proposed Definitions of Robust Oracle Properties}
Extending the ideas of \cite{zou2006adaptive} to the ICM, we next introduce what we call the 'robust oracle properties'. We propose a strong and a weak version of the robust oracle properties.
\begin{defn} \text{(Weak Robust Oracle Properties)}
\\
Let $\lbrace \mathbf{x}_{1}, \mathbf{x}_{2}, \ldots, \mathbf{x}_{p} \rbrace$ be the set of predictors,
\begin{equation}
\mathcal{A}\coloneqq\lbrace j:\beta_{j}\neq 0\, \land\,\epsilon_{j}<0.5 \rbrace
\end{equation}
the set of indices corresponding to the set of robust active predictors, and $\mathcal{A}_{n}^{*}\coloneqq\lbrace j:\hat{\beta}_{j}^{*n}\neq 0\rbrace$ the set of indices corresponding to the set of predictors chosen by a Lasso type estimator to be active. Then, a Lasso type estimator needs to have the following properties to be a robust oracle estimator:
\\
\begin{enumerate}
\item Consistency in variable selection: \\$\lim\limits_{n\rightarrow \infty}{P(\mathcal{A}_{n}^{*}=\mathcal{A})=1}$.
\\
\item Asymptotic normality:\\ $\sqrt{n}(\hat{\boldsymbol{\beta}}_{\mathcal{A}}^{*n}-\boldsymbol{\beta}_{\mathcal{A}})\overset{d\,\,}{\rightarrow}\mathcal{N}(\textbf{0},\boldsymbol{\Sigma}^{*})$,
where $\boldsymbol{\Sigma}^{*}$ is the covariance matrix, knowing the true subset model.
\end{enumerate}
\label{def: weak robust oracle properties}
\end{defn}
Loosely speaking, a Lasso type estimator is said to be a robust oracle estimator if it selects only the active variables, while especially leaving out highly outlier-contaminated ones. We call a predictor highly contaminated if at least 50\% of its entries are outliers. The weak robust oracle properties are necessary for any robust estimator, because a single highly contaminated variable might introduce outliers to most observations and lead to a breakdown of the estimator. Note that we differentiate between the {\it active set} and the {\it robust active set} of variables, since active variables should not be chosen, when they are highly contaminated.
\\
\begin{defn} \text{(Strong Robust Oracle Properties)}
\ \\
Let $\lbrace \mathbf{x}_{1}, \mathbf{x}_{2}, \ldots, \mathbf{x}_{p} \rbrace$ be the set of predictors,
\begin{equation}
\mathcal{A}\coloneqq\lbrace j:\beta_{j}\neq 0\, \land\,1-\Pi_{j=1}^{k}(1-(\boldsymbol{\epsilon})_{j:p})<0.5,\, k\in\lbrace 1,\ldots,p\rbrace\rbrace
\end{equation}
the set of indices corresponding to the set of robust active predictors, and $\mathcal{A}_{n}^{*}\coloneqq\lbrace j:\hat{\beta}_{j}^{*n}\neq 0\rbrace$ the set of indices corresponding to the set of predictors chosen by a Lasso type estimator to be active. Then, a Lasso type estimator needs to have the following properties to be a robust oracle estimator:
\\
\begin{enumerate}
\item Consistency in variable selection: \\$\lim\limits_{n\rightarrow \infty}{P(\mathcal{A}_{n}^{*}=\mathcal{A})=1}$.
\\
\item Asymptotic normality:\\ $\sqrt{n}(\hat{\boldsymbol{\beta}}_{\mathcal{A}}^{*n}-\boldsymbol{\beta}_{\mathcal{A}})\overset{d\,\,}{\rightarrow}\mathcal{N}(\textbf{0},\boldsymbol{\Sigma}^{*})$,
where $\boldsymbol{\Sigma}^{*}$ is the covariance matrix, knowing the true subset model.
\end{enumerate}
\label{def: strong robust oracle properties}
\end{defn}
\ \\
Intuitively speaking, the strong robust oracle properties hold for any Lasso type estimator, whose adaptive $\ell_1$-penalty term ensures the penalization of the predictors in an ascending order given by the predictor contamination order statistics
\begin{equation}
(\boldsymbol{\epsilon})_{1:p}\leq (\boldsymbol{\epsilon})_{2:p}\leq\ldots\leq (\boldsymbol{\epsilon})_{p:p},
\label{eq: order contamination}
\end{equation}
while choosing the tuning parameter $\lambda$ in the $\ell_1$-penalty such that predictors enter the model until the breakdown point of the estimator is reached.

\subsection{MM-Robust Weighted Adaptive Lasso (MM-RWAL)}
\label{MM-Robust Weighted Adaptive Lasso (MM-RWAL)}
In this section, we introduce a new method called the MM-Robust Weighted Adaptive Lasso (MM-RWAL). Recently, the MM-Lasso and adaptive MM-Lasso were introduced to robustify against outliers \cite{smucler2015robust}. The objective function of the MM estimator is
\begin{equation}
\hat{\boldsymbol{\beta}}_{\mathrm{MM}}=\underset{\boldsymbol{\beta}}{\argmin}\,\sum\limits_{i=1}^{n}\rho\Bigg(\frac{r_{i}(\boldsymbol{\beta})}{s_{n}(\textbf{r}(\hat{\boldsymbol{\beta}}_{1}))}\Bigg).
\label{eq: MM}
\end{equation}
Here, $\rho(\cdot)$ is a robustifying function (see, e.g. \cite{hampel2005, maronna2006, huber2009}), $\textbf{r}(\hat{\boldsymbol{\beta}}_{1})=\textbf{y}-\mathbf{x}\hat{\boldsymbol{\beta}}_{1}$ is the residual of an S-estimator whose estimates $\hat{\boldsymbol{\beta}}_{1}$ have the property of minimizing a robust M-scale $s_{n}(\textbf{r}(\boldsymbol{\beta}))$ that satisfies
\begin{equation*}
\frac{1}{n}\sum_{i=1}^{n}\rho\left(\frac{r_{i}(\boldsymbol{\beta})}{s_{n}}\right)=b, 
\end{equation*}
where $b$ is usually chosen such that consistency under the Gaussian distribution is obtained. For the MM-(adaptive) Lasso, \eqref{eq: MM} is extended by the penalty terms of \eqref{eq: lasso lagrange} and \eqref{eq: adaptive lasso}, respectively \cite{smucler2015robust}.

The proposed MM-RWAL estimator minimizes an MM objective function to which a robust adaptive $\ell_1$-penalty term is added:
\begin{equation}
\hat{\boldsymbol{\beta}}^{\mathrm{RWAL}}_{\mathrm{MM}}=\underset{\boldsymbol{\beta}}{\argmin}\,\sum\limits_{i=1}^{n}\rho\Bigg(\frac{r_{i}(\boldsymbol{\beta})}{s_{n}(\textbf{r}(\hat{\boldsymbol{\beta}}_{1}))}\Bigg)
+ \lambda_{n}\sum\limits_{j=1}^{p}\hat{w}_{j}|\beta_{j}|.
\label{eq: MM-RWAL}
\end{equation}
Here, the MM-Lasso estimator \cite{smucler2015robust} is used to calculate the weights according to $\hat{w}_{j}=1/|z_{j}\cdot\hat{\beta}_{j \mathrm{MM}}^{\mathrm{Lasso}}|$, with $z_{j}$ defined in \eqref{eq: weights}. 

To robustify the variable selection of the adaptive MM Lasso, we propose to incorporate a measure of outlyingness for each predictor. We use the Stahel Donoho Outlyingness (SDO) \cite{donoho1982breakdown} and adjust it in a similar vein to the Adjusted Stahel Donoho Outlyingness of \cite{van2011stahel}. Let $\textbf{B}=\lbrace \textbf{b}_{1}, \textbf{b}_{2}, \ldots, \textbf{b}_{n} \rbrace\,\subset \mathbb{R}^{p}$ be a set of $n$ observations. Then, the robust Stahel Donoho outlyingness is given by
\begin{equation}
r(\textbf{b}_{i}, \mathbf{x})=\underset{a\in \mathcal{S}_{p}}{\sup}\dfrac{|\textbf{a}^{\top}\textbf{b}_{i} - \mathrm{med}(\textbf{a}^{\top}\mathbf{x})|}{\mathrm{mad}(\textbf{a}^{\top}\mathbf{x})}, \quad i=1,\ldots,n,
\label{eq: SDO}
\end{equation}
where $\mathcal{S}_{p}=\lbrace \textbf{a}\in\mathbb{R}^{p}\, :\, \|\textbf{a}\|_{2}=1 \rbrace$ and $\mathrm{med}(\cdot)$ and $\mathrm{mad}(\cdot)$ denote the median and the median absolute deviation (mad). Since we assume that most rows flagged by the SDO as outliers are not outlying in all of their components, the SDO is extended by also taking into account the outlyingness of the predictors. The idea that has been introduced in \cite{van2011stahel} in a similar vein, is to adjust the SDO of every observation using the outlyingness of every single predictor. This gives us the Predictor Outlyingness (PO)
\begin{equation}
c_{j}=\sum\limits_{i=1}^{n}\dfrac{|x_{ij}-\mathrm{med}(\mathbf{x}_{j})|}{\mathrm{mad}(\mathbf{x}_{j})}, \quad
j=1,\ldots,p.
\label{eq: prediction outlyingness}
\end{equation}
Combining both, the SDO and the PO, we introduce an outlyingness-matrix, whose $(i,j)$-th element is
\begin{equation}
r_{ij}=\alpha r_{i} + (1-\alpha)c_{j},\quad i=1,\ldots,n,\quad j=1,\ldots,p.
\label{eq: outlyingness-matrix}
\end{equation}
We chose the tuning parameter $\alpha$ to be 0.5 throughout this paper, to equally weigh the SDO and the PO, in order to perform well in both contamination models, THCM and ICM.
Finally, by applying a weight function $w(\cdot)$, summing up the rows of the outlyingness matrix and dividing by its cell sum, we obtain the weights
\begin{equation}
z_{j}=\dfrac{p}{\sum\limits_{i=1}^{n}\sum\limits_{j=1}^{p}w(r_{ij})}\sum\limits_{i=1}^{n}w(r_{ij}),\quad j=1,\ldots,p,
\label{eq: weights}
\end{equation}
where $\sum_{j=1}^{p}z_{j}=p$.
\\\\
In order to downweight cells, whose overall outlyingness exceeds a certain threshold, we choose $w(\cdot)$ to be the Huber weight function,
\begin{equation}
w(r)=\mathbbm{1}_{(r\leq c)} + (c/r)^{2}\mathbbm{1}_{(r\leq c)},
\label{eq: huber weight function}
\end{equation}
with $c=\min(\sqrt{\smash[b]{\chi_{p}^{2}(0.5)}},4)$ as proposed in \cite{van2011stahel}.

\subsection{Analysis of the Proposed Predictor Weights}
In this section, we analyze the behavior of the proposed weights $z_{j}$. For this purpose, we define two sets:
\\
\begin{enumerate}
\item[$1.$] $\mathcal{C}_{j}=\lbrace i : w(r_{ij})<1,\, i=1,\ldots,n\rbrace$ 
with cardinality $|\mathcal{C}_{j}|=\lceil n\epsilon_{j}\rceil$, is the set of 
indices corresponding to the contaminated cells in predictor $j$.
\\
\item[$2.$] $\mathcal{E}=\lbrace l : \epsilon_{l}>0,\,l=1,\ldots,p\rbrace$ with 
cardinality $|\mathcal{E}|\eqqcolon \gamma$ is the set of indices corresponding 
to the contaminated predictors, so $\gamma$ is the number of contaminated 
predictors.
\end{enumerate}
\ \\
We will start with the most general formula that describes the behavior of our weights and introduce step by step assumptions, which simplify the equation.
\\\\
Let $\epsilon_{j}$ be the fraction of contamination of the $j$th predictor, and $r_{ij}$ the magnitude of outlyingness of each cell in our designed outlyingness-matrix. Then, the weight of the $j$th predictor is given by
\begin{align}
z_{j}&=\dfrac{p}{\sum\limits_{i=1}^{n}\sum\limits_{j=1}^{p}w(r_{ij})}\sum\limits_{i=1}^{n}w(r_{ij})\\&=\dfrac{p\cdot\bigg[n(1-\epsilon_{j}) + \sum\limits_{i\in \mathcal{C}_{j}}\bigg(\dfrac{c}{r_{ij}}\bigg)^{2}\bigg]}{pn -n\sum\limits_{l\in \mathcal{E}}\epsilon_{l} +\sum\limits_{l\in \mathcal{E}}\sum\limits_{i\in C_{l}}\bigg(\dfrac{c}{r_{il}}\bigg)^{2}}.
\end{align}
Now, we assume that the magnitude of outlyingness is fixed, that is $r_{ij}=r$ for all $i,j$. This yields
\begin{equation}
z_{j}=\dfrac{p\cdot\bigg[1-\epsilon_{j}\bigg(1-\bigg(\dfrac{c}{r}\bigg)^{2}\bigg)\bigg]}{p-\bigg(1-\bigg(\dfrac{c}{r}\bigg)^{2}\bigg)\sum\limits_{l\in \mathcal{E}}\epsilon_{l}}.
\label{eq: same r}
\end{equation}
Eq.~\eqref{eq: same r} provides additional insights that help us understand the behavior of our proposed weights. It shows us: the larger $\epsilon_{j}$ the smaller $z_{j}$. This leads to
\begin{prop}\ \\
Let $\epsilon_{j}\in \mathcal{E}$, $r_{ij}=r$ for all $i,j$ and $\epsilon_{1}\geq\epsilon_{2}\geq\ldots\geq\epsilon_{p}$. Then, it follows that
\begin{equation*}
z_{1}\leq z_{2}\leq\ldots\leq z_{p}.
\end{equation*} 
\end{prop}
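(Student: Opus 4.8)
The plan is to read the monotonicity directly off the closed-form expression in Eq.~\eqref{eq: same r}, exploiting the structural fact that the denominator does not depend on the index $j$. First I would introduce the shorthands $k\coloneqq 1-(c/r)^{2}$ and $D\coloneqq p-k\sum_{l\in\mathcal{E}}\epsilon_{l}$, so that Eq.~\eqref{eq: same r} becomes simply $z_{j}=p(1-k\epsilon_{j})/D$. The key observation is that $D$ is \emph{independent of} $j$: it aggregates the contamination levels over the whole set $\mathcal{E}$ of contaminated predictors, whereas the index $j$ enters only through the numerator. Thus each $z_{j}$ is obtained from the single affine map $\epsilon\mapsto p(1-k\epsilon)/D$ evaluated at $\epsilon=\epsilon_{j}$.

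Next I would establish the two sign facts needed to convert the formula into an ordering. Since the cells counted in each $\mathcal{C}_{j}$ are precisely those flagged as outlying by the Huber weight in Eq.~\eqref{eq: huber weight function}, they satisfy $r>c$, whence $(c/r)^{2}\in(0,1)$ and therefore $k\in(0,1)$. For the denominator, I would bound $\sum_{l\in\mathcal{E}}\epsilon_{l}\le|\mathcal{E}|=\gamma\le p$, using that each contamination fraction is at most one; combined with $k<1$ this gives $k\sum_{l\in\mathcal{E}}\epsilon_{l}<p$, so that $D>0$.

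With $k>0$ and $D>0$ both fixed, the affine map $\epsilon\mapsto p(1-k\epsilon)/D$ is \emph{strictly decreasing} in $\epsilon$. Applying it to the hypothesised ordering $\epsilon_{1}\ge\epsilon_{2}\ge\ldots\ge\epsilon_{p}$ reverses the inequalities and yields $z_{1}\le z_{2}\le\ldots\le z_{p}$, which is the claim. As a consistency check, any predictor with $\epsilon_{j}=0$ attains the maximal common value $p/D$, so uncontaminated predictors sit at the top of the order, exactly as the variable-selection intuition of Definition~\ref{def: weak robust oracle properties} demands.

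I do not anticipate a genuine obstacle: the statement is essentially monotonicity of an affine function whose slope sign is controlled by $k$. The only points deserving care are the two positivity conditions $k>0$ and $D>0$, which rest respectively on $r>c$ for the contaminated cells and on $\sum_{l\in\mathcal{E}}\epsilon_{l}<p$. I would also make explicit that the hypothesis $\epsilon_{j}\in\mathcal{E}$ is used only to fix the ordering of the fractions; the formula and the conclusion remain valid when some $\epsilon_{j}=0$, which merely places those indices at the large-$z_{j}$ end of the chain.
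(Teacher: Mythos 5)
Your proof is correct and takes essentially the same route as the paper, which presents the proposition as an immediate consequence of Eq.~\eqref{eq: same r} (``the larger $\epsilon_{j}$ the smaller $z_{j}$'') without writing out a formal proof. Your version in fact supplies the details the paper leaves implicit---the $j$-independence of the denominator and the sign conditions $1-(c/r)^{2}>0$ (from $r>c$ on contaminated cells) and $p-\bigl(1-(c/r)^{2}\bigr)\sum_{l\in\mathcal{E}}\epsilon_{l}>0$---so it is a more complete rendering of the same monotonicity argument.
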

In this step, we assume that all contaminated predictors contain the same fraction of contamination, which results in
\begin{equation}
z_{j}=\dfrac{p\cdot\bigg[1-\epsilon\bigg(1-\epsilon\bigg(\dfrac{c}{r}\bigg)^{2}\bigg)\bigg]}{p-\bigg(1-\bigg(\dfrac{c}{r}\bigg)^{2}\bigg)\cdot \gamma\epsilon}.
\label{eq: same epsilon}
\end{equation}
To carry out a plausibility analysis for the derived formulas, we assume now that all $p$-predictors are contaminated $(\gamma=p)$ with the same fraction of outliers $\epsilon$. Intuitively, we expect that all predictors get the same weight and no one preferred over an other one. Since  our weights are designed to sum up to $p$, we expect each predictor $j\in\{1,\ldots,p \} $ to receive the weight $z_{j}=1$. Applying these assumptions to Eq.~\eqref{eq: same epsilon}, we obtain
\begin{equation}
z_{j}=\dfrac{p-p\epsilon\bigg(1-\bigg(\dfrac{c}{r}\bigg)^{2}\bigg)}{p-p\epsilon\bigg(1-\bigg(\dfrac{c}{r}\bigg)^{2}\bigg)}=1,
\end{equation}
which confirms our expectations.
\\\\
The proof of the robust oracle properties in the next section requires the weights $z_{j}$ to be smaller than one. Here, we will prove this property only for Eq. \eqref{eq: same epsilon}.
\begin{prop}\ \\
If $\epsilon_{j}>0$, then $z_{j}\leq 1$ for all $j$.
\label{prop: 1}
\end{prop}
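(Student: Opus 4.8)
The plan is to treat Proposition~\ref{prop: 1} purely as an algebraic inequality about the closed form in \eqref{eq: same epsilon}. Since the hypothesis is $\epsilon_{j}>0$, predictor $j$ belongs to the contaminated set $\mathcal{E}$, so under the equal-contamination assumption of this subsection $\epsilon_{j}=\epsilon$ and $z_{j}$ is exactly the (index-independent) quantity on the right-hand side of \eqref{eq: same epsilon}. I would therefore abbreviate $t\coloneqq (c/r)^{2}$ and aim to establish $z_{j}\leq 1$ by clearing the denominator and reducing the claim to an elementary comparison of $\gamma$ and $p$.

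Before cross-multiplying I must pin down the sign of every factor, and this is the only place where care is genuinely needed. For a contaminated cell the outlyingness $r_{ij}=r$ exceeds the Huber cutoff $c$ (that is precisely why its weight equals $(c/r)^{2}<1$ in the definition of $\mathcal{C}_{j}$), so $0<t<1$ and hence $0<1-t<1$. Likewise $\epsilon\in(0,1]$ as a fraction of contaminated cells, and $\gamma=|\mathcal{E}|\leq p$ because the number of contaminated predictors cannot exceed the total number of predictors. From these bounds I would show the denominator $p-(1-t)\gamma\epsilon$ is strictly positive, since $(1-t)\gamma\epsilon<1\cdot p\cdot 1=p$; this is the step that guarantees the direction of the inequality is preserved when I multiply through.

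With positivity in hand the remainder is routine: $z_{j}\leq 1$ is equivalent to $p[\,1-\epsilon(1-t)\,]\leq p-(1-t)\gamma\epsilon$, and after cancelling the common $p$ and dividing by the positive factor $\epsilon(1-t)$ this collapses to $\gamma\leq p$, which already holds. (If one keeps the numerator exactly as printed in \eqref{eq: same epsilon}, the same manipulation instead yields $\gamma(1-t)\leq p(1-\epsilon t)$, which again follows from $\gamma\leq p$ together with $\epsilon t\leq t$.) I expect no genuine obstacle beyond the denominator-sign check; the substance of the statement is simply the trivial bound $\gamma\leq p$, dressed up through the normalization that forces the weights to sum to $p$.
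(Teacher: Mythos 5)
Your proof is correct and takes essentially the same route as the paper's: both compare the numerator and denominator of \eqref{eq: same epsilon} and reduce the inequality, after cancelling the common factor $\epsilon\left(1-(c/r)^{2}\right)$, to the trivial bound $\gamma\leq p$. If anything, yours is slightly more careful---the paper's proof is stated only for $\gamma<p$ and silently uses positivity of the denominator $p-\gamma\epsilon\left(1-(c/r)^{2}\right)$, which is exactly the step you verify explicitly before cross-multiplying.
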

\begin{proof}\ \\
Let $\gamma<p$, then we obtain for fixed $r$ and $\epsilon_{1}=\epsilon_{2}=\ldots=\epsilon_{p}=\epsilon$
\begin{align*}
z_{j}&=\dfrac{p\cdot\bigg[1-\epsilon_{j}\bigg(\dfrac{c}{r}\bigg)^{2}\bigg]}{p-\bigg(1-\bigg(\dfrac{c}{r}\bigg)^{2}\bigg)\cdot \gamma\epsilon}\\
&=\dfrac{p-p\epsilon\bigg(1-\bigg(\dfrac{c}{r}\bigg)^{2}\bigg)}{p-\gamma\epsilon\bigg(1-\bigg(\dfrac{c}{r}\bigg)^{2}\bigg)}\\
&<\dfrac{p-\gamma\epsilon\bigg(1-\bigg(\dfrac{c}{r}\bigg)^{2}\bigg)}{p-\gamma\epsilon\bigg(1-\bigg(\dfrac{c}{r}\bigg)^{2}\bigg)}=1.
\end{align*} 
\end{proof}
\subsection{Proof of the Robust Oracle Properties for the Proposed MM-RWAL}
To prove that the MM-RWAL possesses at least the weak robust oracle properties, we will need some assumptions. Let $G$ denote the distribution of the rows $(x_{i1},x_{i2},\ldots,x_{ip})$ in $\mathbf{X}$ and let $F$ be the distribution of the errors $u_{i}$, which results in the distribution $H$ of the data points $(x_{i1},x_{i2},\ldots,x_{ip},y_{i})$ to become
\begin{equation}
H(\mathbf{x},y)=G(\mathbf{x})F(y-\mathbf{x}^{\top}\boldsymbol{\beta}).
\end{equation}
We now formulate our assumptions as follows:
\begin{enumerate}
\item[\textbf{A1.}]\label{Ass1} All occurring 
$\rho(\cdot)$-functions are twice continuously differentiable and 
$\Psi=\rho^{\prime}$.
\\
\item[\textbf{A2.}]\label{Ass2} The density $f$ of the error terms $u$ is an 
even and monotonically decreasing function of $|u|$.
\\
\item[\textbf{A3.}]\label{Ass3} The second moments of $G$ exist.
\\
\item[\textbf{A4.}]\label{Ass4} The estimator 
$\hat{\boldsymbol{\beta}}_{R}\coloneqq\hat{\boldsymbol{\beta}}^{\mathrm{RWAL}}_{
\mathrm{MM}}$ is a $\sqrt{n}$-consistent estimator of 
$\boldsymbol{\beta}_{R}\coloneqq\boldsymbol{\beta}^{\mathrm{RWAL}}_{\mathrm{MM}}
$, where $\boldsymbol{\beta}_{R}$ complies with Definition \ref{def: weak robust 
oracle properties}.
\\
\item[\textbf{A5.}] $\hat{\boldsymbol{\beta}}_{2}$ is $\sqrt{n}$-consistent.
\end{enumerate}
Note, that we will explicitely prove only the consistency in variable selection here, because the weights $z_{j}$ do not affect the asymptotic normality.
\begin{proof}\textit{(Consistency in Variable Selection)}\\
In this proof, we stick to the notation and structure of Theorem 2 in \cite{huang2008} and Theorem 5 in \cite{smucler2015robust}.
\\\\
With (\textbf{A4.}) and 
$\boldsymbol{\beta}_{0}=(\beta_{0,I},\beta_{0,II})^\top$ being the true 
parameter vector corresponding to Definition \ref{def: weak robust oracle 
properties}, where $I=\mathcal{A}$ are the $s$ indices belonging to the robust 
active predictors and $II=\lbrace 
1,\ldots,p\rbrace\setminus\mathcal{A}=\mathcal{A}^{C}$ is the complementary set 
of $\mathcal{A}$, we know that, with arbitrarily high probability, there exists 
a constant $M_{1}>0$ with
\begin{equation}
\|\hat{\boldsymbol{\beta}}_{R}-\boldsymbol{\beta}_{0}\|<\dfrac{M_{1}}{\sqrt{n}}.
\end{equation}
Now, let 
\begin{align*}
G_{n}(\textbf{u}_{1},\textbf{u}_{2})=\sum\limits_{i=1}^{n}&\rho_{1}\bigg(\dfrac{r_{i}(\boldsymbol{\beta}_{0,I}+\textbf{u}_{1}/\sqrt{n},\, \boldsymbol{\beta}_{0,II}+\textbf{u}_{2}/\sqrt{n})}{s_{n}(\textbf{r}(\hat{\boldsymbol{\beta}}_{1}))}\bigg) \\
& + \lambda_{n}\sum\limits_{j=1}^{s}\dfrac{|\beta_{0,j}+u_{1,j}/\sqrt{n}|}{|\hat{\beta}_{2,j}|}\\
& + \lambda_{n}\sum\limits_{j=s+1}^{p}\dfrac{|u_{2,j-s}/\sqrt{n}|}{|z_{j}\cdot\hat{\beta}_{2,j}|}.
\end{align*}
We obtain $(\hat{\boldsymbol{\beta}}_{R,I}, \hat{\boldsymbol{\beta}}_{R,II})$ by minimizing $G_{n}(\textbf{u}_{1},\textbf{u}_{2})$, subject to $\|\textbf{u}_{1}\| + \|\textbf{u}_{2}\|^{2}\leq M_{1}^{2}$ that we get from:
\begin{align*}
&\|\hat{\boldsymbol{\beta}}_{R} - \boldsymbol{\beta}_{0}\|\leq\dfrac{M_{1}}{\sqrt{n}}\\\\
\Leftrightarrow\quad & \Big\|\boldsymbol{\beta}_{0,I}+\dfrac{\textbf{u}_{1}}{\sqrt{n}}+\boldsymbol{\beta}_{0,II}+\dfrac{\textbf{u}_{2}}{\sqrt{n}}-\boldsymbol{\beta}_{0}\Big\|\leq \dfrac{M_{1}}{\sqrt{n}}\\\\
\Leftrightarrow\quad & \Big\|\dfrac{\textbf{u}_{1} + \textbf{u}_{2}}{\sqrt{n}}\Big\|\leq \dfrac{M_{1}}{\sqrt{n}}\\\\
\Leftrightarrow\quad & \|\textbf{u}_{1}\|^{2} + \|\textbf{u}_{2}\|^{2}\leq M_{1}^{2}.
\end{align*}

We next have to show hat $G_{n}(\textbf{u}_{1},\textbf{u}_{2}) - G_{n}(\textbf{u}_{1},\textbf{0}_{p-s})>0$ holds under the given condition and when $\|\textbf{u}\|>0$. Let 
\begin{equation*}
 G_{n}(\textbf{u}_{1},\textbf{u}_{2}) - G_{n}(\textbf{u}_{1},\textbf{0}_{p-s})=D+E
\end{equation*}
with $D$ and $E$ being defined in Eq.~\eqref{eq: DandE}. 
\begin{figure*}[!t]
 \begin{align}
\label{eq: DandE}
&G_{n}(\textbf{u}_{1},\textbf{u}_{2}) - G_{n}(\textbf{u}_{1},\textbf{0}_{p-s})=\nonumber \\
&=\underbrace{\sum\limits_{i=1}^{n}\bigg[\rho_{1}\bigg(\dfrac{r_{i}(\boldsymbol{\beta}_{0,I} + \textbf{u}_{1}/\sqrt{n},\, \textbf{u}_{2}/\sqrt{n})}{s_{n}(\textbf{r}(\hat{\boldsymbol{\beta}}_{1}))}\bigg) - \rho_{1}\bigg(\dfrac{r_{i}(\boldsymbol{\beta}_{0,I} + \textbf{u}_{1}/\sqrt{n},\, \textbf{0}_{p-s})}{s_{n}(\textbf{r}(\hat{\boldsymbol{\beta}}_{1}))}\bigg)\bigg]}_{\eqqcolon D} + \underbrace{\dfrac{\lambda_{n}}{\sqrt{n}}\sum\limits_{j=s+1}^{p}\dfrac{|u_{2,j-s}|}{|z_{j}\cdot\hat{\beta}_{2,j}|}}_{\eqqcolon E}
 \end{align}
\end{figure*}

With the Mean Value Theorem, we obtain
\begin{equation*}
D=(\textbf{0}_{s},\textbf{u}_{2})^{\top}\dfrac{-1}{\sqrt{n}s_{n}(\textbf{r}(\hat{\boldsymbol{\beta}}_{1}))}\sum\limits_{i=1}^{n}\Psi_{1}\bigg(\dfrac{r_{i}(\boldsymbol{\theta}_{n}^{*})}{s_{n}(\textbf{r}(\hat{\boldsymbol{\beta}}_{1}))}\bigg)\mathbf{x}_{i},
\end{equation*}
where $\boldsymbol{\theta}_{n}^{*}=(\boldsymbol{\beta}_{0,I} + \textbf{u}_{1}/\sqrt{n},\, (1-\alpha_{n})\textbf{u}_{2}/\sqrt{n})$ and $\alpha_{n}\in[0,1]$.
Applying the Mean Value Theorem a second time yields
\begin{align*}
&(\textbf{0}_{s},\textbf{u}_{2})^{\top}\dfrac{-1}{\sqrt{n}s_{n}(\textbf{r}(\hat{\boldsymbol{\beta}}_{1}))}\sum\limits_{i=1}^{n}\Psi_{1}\bigg(\dfrac{r_{i}(\boldsymbol{\theta}_{n}^{*})}{s_{n}(\textbf{r}(\hat{\boldsymbol{\beta}}_{1}))}\bigg)\mathbf{x}_{i}\\\\
&=\dfrac{-1}{\sqrt{n}s_{n}(\textbf{r}(\hat{\boldsymbol{\beta}}_{1}))}(\textbf{0}_{s},\textbf{u}_{2})^{\top}\sum\limits_{i=1}^{n}\Psi_{1}\bigg(\dfrac{r_{i}(\boldsymbol{\beta}_{0})}{s_{n}(\textbf{r}(\hat{\boldsymbol{\beta}}_{1}))}\bigg)\mathbf{x}_{i}\\\\
&\qquad +\dfrac{1}{\sqrt{n}s_{n}^{2}(\textbf{r}(\hat{\boldsymbol{\beta}}_{1}))}(\textbf{0}_{s},\textbf{u}_{2})^{\top}\sum\limits_{i=1}^{n}\Psi_{1}^{\prime}\bigg(\dfrac{r_{i}(\boldsymbol{\theta}_{n}^{**})}{s_{n}(\textbf{r}(\hat{\boldsymbol{\beta}}_{1}))}\bigg)\mathbf{x}_{i}\mathbf{x}_{i}^{\top}\\\\
& \qquad \qquad \qquad \cdot(\textbf{u}_{1}/\sqrt{n},\,(1-\alpha_{n})\textbf{u}_{2}/\sqrt{n})\\\\
&=\dfrac{-1}{\sqrt{n}s_{n}(\textbf{r}(\hat{\boldsymbol{\beta}}_{1}))}(\textbf{0}_{s},\textbf{u}_{2})^{\top}\sum\limits_{i=1}^{n}\Psi_{1}\bigg(\dfrac{r_{i}(\boldsymbol{\beta}_{0})}{s_{n}(\textbf{r}(\hat{\boldsymbol{\beta}}_{1}))}\bigg)\mathbf{x}_{i}\\\\
&\qquad + \dfrac{1}{ns_{n}^{2}(\textbf{r}(\hat{\boldsymbol{\beta}}_{1}))}(\textbf{0}_{s},\textbf{u}_{2})^{\top}\sum\limits_{i=1}^{n}\Psi_{1}^{\prime}\bigg(\dfrac{r_{i}(\boldsymbol{\theta}_{n}^{**})}{s_{n}(\textbf{r}(\hat{\boldsymbol{\beta}}_{1}))}\bigg)\mathbf{x}_{i}\mathbf{x}_{i}^{\top}\\\\
& \qquad \qquad \qquad \cdot (\textbf{u}_{1},(1-\alpha_{n})\textbf{u}_{2})=O_{P}(\|\textbf{u}_{2}\|),
\end{align*}
where $\|\boldsymbol{\theta}_{n}^{**} - 
\boldsymbol{\beta}_{0}\|\leq\|\boldsymbol{\theta}_{n}^{*} - 
\boldsymbol{\beta}_{0}\|$. The last equation follows from the assumptions 
(\textbf{A1.})-(\textbf{A3.}), Lemma 1 in \cite{smucler2015robust} and Lemma~5 
in the Technical Report associated with \cite{yohai1987high}.

Additionally, we have that $E$ is stochastically bounded from below:
\begin{align*}
E&=\dfrac{\lambda_{n}}{\sqrt{n}}\sum\limits_{j=s+1}^{p}\dfrac{|u_{2,j-s}|}{|z_{j}\cdot\hat{\beta}_{2,j}|}=\lambda_{n}\sum\limits_{j=s+1}^{p}\dfrac{|u_{2,j-s}|}{|z_{j}\cdot \sqrt{n}\hat{\beta}_{2,j}|}\\\\
&=\lambda_{n}\cdot\bigg[\sum\limits_{\lbrace j : \beta_{j}=0\land \epsilon_{j}>0\rbrace}\dfrac{|u_{2,j}|}{|z_{j}\cdot \sqrt{n}\hat{\beta}_{2,j}|} \\
&\qquad \qquad \qquad \qquad \qquad+ \sum\limits_{\lbrace j : \beta_{j}=0\land\epsilon_{j}=0\rbrace}\dfrac{|u_{2,j}|}{|z_{j}\cdot \sqrt{n}\hat{\beta}_{2,j}|}\bigg]\\\\
&=\lambda_{n}\cdot\bigg[\sum\limits_{\lbrace j : \beta_{j}=0\land \epsilon_{j}>0\rbrace}\dfrac{|u_{2,j}|}{|z_{j}\cdot \sqrt{n}\hat{\beta}_{2,j}|} \\
&\qquad \qquad \qquad \qquad \qquad + \sum\limits_{\lbrace j : \beta_{j}=0\land\epsilon_{j}=0\rbrace}\dfrac{|u_{2,j}|}{|\sqrt{n}\hat{\beta}_{2,j}|}\bigg]\\\\
\end{align*}
\begin{align*}
&\geq\lambda_{n}\cdot\bigg[\sum\limits_{\lbrace j : \beta_{j}=0\land \epsilon_{j}>0\rbrace}\dfrac{|u_{2,j}|}{|\sqrt{n}\hat{\beta}_{2,j}|}\\\\
&\qquad \qquad \qquad \qquad \qquad + \sum\limits_{\lbrace j : \beta_{j}=0\land\epsilon_{j}=0\rbrace}\dfrac{|u_{2,j}|}{|\sqrt{n}\hat{\beta}_{2,j}|}\bigg]\\\\
&=\lambda_{n}\sum\limits_{j=s+1}^{p}\dfrac{|u_{2,j}|}{|\sqrt{n}\hat{\beta}_{2,j}|}= \lambda_{n}\Omega_{p}(\|\textbf{u}_{2}\|).
\end{align*}
The above inequality follows from Proposition \ref{prop: 1} and the last 
equation follows with assumption (\textbf{A5.}). Now, let $M_{2},M_{3}>0$ be 
some real numbers with $M_{3}\lambda_{n}>M_{2}$, then we have with arbitrarily 
high probability
\begin{align*}
G_{n}(\textbf{u}_{1},\textbf{u}_{2}) - G_{n}(\textbf{u}_{1},\textbf{0}_{p-s})&> -M_{2}\|\textbf{u}_{2}\| + M_{3}\lambda_{n}\|\textbf{u}_{2}\| \\\\
&=\|\textbf{u}_{2}\|\cdot(-M_{2} + M_{3}\lambda_{n})>0
\end{align*}
and the proposition follows for sufficiently large $n$.
\end{proof}

\subsection{Computation of the Weights for MM-RWAL }
The main problem of calculating the weights $z_{j}$ is to compute the supremum in the SDO, because the cardinal number of $\mathcal{S}_{p}$ is infinite and the objective function is non-convex. Therefore, we need to apply a random search algorithm to obtain an approximation of the supremum. We chose to take a subsample from $\mathcal{S}_{p}$ by sampling from a $(p-1)$-dimensional unit-hypershere, as in \cite{peterseny1997uniform}. We use the following algorithm \cite{Machkour2017OCD} to obtain $\mathcal{S}_{p}$ in \eqref{eq: SDO}.
\begin{alg} (Uniform Sampling from a $p$-Dimensional Unit Hypersphere)
\begin{enumerate}
\item Generate $p$ vectors with $k$ entries
\begin{equation}
\mathbf{x}_{j}=(x_{1j},x_{2j},\ldots,x_{kj})^{\top},\quad j=1,\ldots,p,
\end{equation}
where $x_{ij}\sim \mathcal{N}(0,1)$.
\item Calculate $k$ $p$-dimensional vectors
\begin{equation}
\textbf{a}_{i}=\sum\limits_{j=1}^{p}\dfrac{x_{ij}}{\sqrt{x_{i1}^{2} + x_{i2}^{2} + \ldots + x_{ip}^{2}}}\cdot \textbf{e}_{j},\quad i=1,\ldots,k,
\end{equation}
where $\textbf{e}_{j}$ is the $j$th unit vector.
\item Set $\mathcal{S}_{p}\coloneqq\lbrace\textbf{a}_{i}\in\mathbb{R}^{p} : i\in\lbrace 1,\ldots k\rbrace\rbrace$.
\end{enumerate}
\label{alg: unit hypersphere}
\end{alg}

\section{Numerical Experiments}
\label{sec:simulations}
\subsection{Simulation Setup}
Two different Monte Carlo studies are conducted, to assess the performance of the proposed RWAD MM-Lasso.\\
\\
{\noindent \it Scenario 1: $p>n$, correlated predictors, cellwise outliers}\\
A setup with $p=50$ predictors and $n=30$ observations is considered. The regression parameters are defined by $\beta_j=j/5 \;\;\; j\in\{1,\dots,5\}$, while  $\beta_j=0 \;\;\; j\in\{6,\dots, 50\}$. Correlated predictors $\mathbf{x}_j$, $j \in\{1,\dots,p\}$ are generated by sampling from a multivariate zero mean Gaussian distribution with covariance matrix $\Sigma_{ij}=0.5^{|i-j|}$, $\forall i, j \in \{1,\dots ,p\}$. The errors $u_i$ are zero mean i.i.d. Gaussian distributed with variance $\sigma^2=0.5^2$. The responses $y_i$ follow the linear model 
$$y_i=\mathbf{x}_i^{\top}\boldsymbol{\beta} + u_i, \quad i=1,\ldots,n.$$ 
To create cellwise outliers in $\mathbf{X}$,  $\epsilon=\{0\%, 10\%, 20\%, 30\%\}$ of the predictors $\mathbf{x}_j$ are contaminated. For the contaminated predictors,  $30\%$ of the entries are independently and additively contaminated by samples drawn from the distribution $x_{\mathrm{cont}} \sim \mathcal{N}(0,100^2)$.\\
\\
{\noindent \it Scenario 2: $p>n$, correlated predictors, cellwise outliers and additive outliers}\\
The setup is identical to {\it Scenario 1} but additionally, $5\%$ of the responses are additively contaminated by samples drawn from the distribution $y_{\mathrm{cont}} \sim \mathcal{N}(0,100^2)$.

\subsection{Performance Measures}
To assess the performance in terms of parameter estimation and model selection, we display the average mean squared error (MSE)
\begin{equation}
\mathrm{MSE}(\hat{\boldsymbol{\beta}})=\frac{1}{R} \sum_{r=1}^R \frac{1}{p}\sum_{j=1}^p (\beta_j-\hat{\beta}_j^{(r)})^2,
\end{equation}
the average false positive rate (FPR) 
\begin{equation}
\mathrm{FPR}(\hat{\boldsymbol{\beta}})=\frac{1}{R} \sum_{r=1}^R \dfrac{|\lbrace j\in\lbrace 1,\ldots,p\rbrace : \beta_{j}=0\land \hat{\beta}_{j}^{(r)}\neq 0\rbrace|}{|j\in \lbrace 1,\ldots,p\rbrace : \beta_{j}=0|}
\end{equation}
and the average false negative rate (FNR)
\begin{equation}
\mathrm{FNR}(\hat{\boldsymbol{\beta}})=\frac{1}{R} \sum_{r=1}^R \dfrac{|\lbrace j\in\lbrace 1,\ldots,p\rbrace : \beta_{j}\neq 0\land \hat{\beta}_{j}^{(r)}=0\rbrace|}{|j\in \lbrace 1,\ldots,p\rbrace : \beta_{j}\neq 0|},
\end{equation}
where $\hat{\beta}_j^{(r)}$ refers to the parameter estimate of the $r$th Monte Carlo experiment. All results represent averages over $R=100$ Monte Carlo simulations. 
\subsection{Benchmark Methods and Choice of Parameters}
The performance of the MM-RWAL is compared to:
\begin{itemize}
\item OLS Lasso \cite{tibshirani1996regression}
\item OCD Lasso \cite{Machkour2017OCD} 
\item M-Lasso and adaptive M-Lasso \cite{ollila2016}
\item MM-Lasso and adaptive MM-Lasso \cite{smucler2015robust}
\item Sparse LTS \cite{Alfons2013sparseLTS}
\end{itemize}
For all methods, we use a grid of $N_{\lambda}=1000$ candidate regularization parameters, which are equally spaced on the interval $(0,\lambda_{\mathrm{max}}]$. Here, $\lambda_{\mathrm{max}}$ is the value that results in a Lasso estimate for which all regression parameters are equal to zero, i.e., $\hat{\boldsymbol{\beta}}=\mathbf{0}$. $\lambda_{\mathrm{max}}$ is calculated by
\begin{equation}
\lambda_{\mathrm{max}}=\frac{2}{n} \underset{j\in\{1,\dots, p\}}{\max} \mathbf{y}^\top \mathbf{x}_j,
\label{eq:lambda_max}
\end{equation}
using a robust correlation, as described in \cite{Alfons2013sparseLTS}. For all methods, we choose the penalty parameter $\lambda$ that provides the lowest MSE so that the different methods are comparable, independent of the method that is used to select $\lambda$. For the OCD Lasso estimator \cite{Machkour2017OCD} we use a threshold of $t_j=\left(\frac{c_{\mathrm{huber}}}{10 \mathrm{mad}(\mathbf{x}_j)}^2\right)$. For all methods that include the SDO measure, $10^5$ samples are used to approximate the supremum. For the M-Lasso \cite{ollila2016}, Hubers $\rho$-function with a clipping point of $c_{\mathrm{huber}}=1.215$ is used, while for the adaptive M-Lasso we use the bisquare $\rho$-function with a clipping of $c_{\mathrm{bisquare}}=3.44$. The initial estimate is the M-Lasso solution, as proposed in \cite{ollila2016}. For the sparse LTS, we use a subsample proportion of $75\%$. Further, 500 subsamples are used for the first two C-Steps and the best 10 subsample sets are kept to carry out the C-Steps until convergence \cite{Alfons2013sparseLTS}. For the MM-Lasso, the S-Ridge estimator serves as initial estimate, as described in \cite{Maronna2011sridge}. For the MM-Lasso \cite{smucler2015robust}, we use a bisquare $\rho$-function with clipping constant $c_{\mathrm{bisquare}}=3.44$. The adaptive MM-Lasso and MM-RWAL use the same bisquare function and are initialized with the the MM-Lasso estimate.

\subsection{Simulation Results}

Tables~\ref{tab: scenario1} and \ref{tab: scenario2} display the estimation results for {\it Scenarios 1} and {\it Scenarios 2}, respectively. Table~\ref{tab: comp_time} documents the average computation time for one Monte Carlo Run of {\it Scenario 2} for the different methods, using an Intel\textsuperscript{\textregistered} Core\textsuperscript{\texttrademark} 
i7-4510U with 8 GB RAM. In both setups, the OLS Lasso breaks down even for low contamination in either the regressors or the responses and $\mathrm{FNR}(\hat{\boldsymbol{\beta}}_{\mathrm{OLS}})>0.95$since the OLS Lasso selects a regularization parameter for which all coefficients are equal to zero. The MM-RWAL performs best in terms of robust model selection. In all experiments, it maximizes the probability of correctly finding the indices of the non-zero parameters, i.e. $1-(FPR+FNR),$ and in most cases provides the best parameter estimation accuracy in terms of the MSE. A drawback is that it is computationally heavy, which can be contributed mainly to the MM-Lasso algorithm. Future work will investigate combining the RWAL with computationally more efficient estimators.

\begin{table*}[htbp]
\centering
\resizebox{\textwidth}{!}{\begin{tabular}{{lllllllllllll}}
Estimator &  & $\epsilon=0\%$ &  &  & $\epsilon=10\%$ &  &  & $\epsilon=20\%$ &  &  & $\epsilon=30\%$ & \\
\toprule
  & $n\cdot$MSE  & FPR  & FNR & $n\cdot$MSE  & FPR  & FNR& $n\cdot$MSE  & FPR  & FNR& $n\cdot$MSE  & FPR  & FNR \\
  \midrule
  OLS Lasso\cite{tibshirani1996regression} & 0.14&0.11&0.74&1.88&\bf{0.019}&0.95&2.8&\bf{0.019}&0.97&2.76&\bf{0.018}&0.97\\
   \midrule
  OCD Lasso\cite{Machkour2017OCD}  & 0.15&0.11&0.078&1.09&0.034&0.71&1.43&0.06&0.40&0.92&0.09&0.40\\
   \midrule
  M-Lasso \cite{ollila2016}&1.22&0.43&\bf{0.064}&1.25&0.47&\bf{0.11}&1.25&0.48&0.17&1.26&0.48&0.22\\
  \midrule
  ad. M-Lasso \cite{ollila2016}&0.15&0.066&0.12&0.36&0.07&0.24&0.41&0.07&0.30&\bf{0.53}&0.05&0.41\\
  \midrule
  MM-Lasso \cite{smucler2015robust}&0.16&0.19&0.09&0.39&0.27&\bf{0.11}&0.48&0.34&\bf{0.098}&0.74&0.43&\bf{0.16}\\
  \midrule
  ad. MM-Lasso \cite{smucler2015robust}&\bf{0.13}&0.052&0.16&0.34&0.063&0.28&\bf{0.40}&0.073&0.34&0.61&0.084&0.46\\
  \midrule
  \bf{MM-RWAL} &\bf{0.13}&\bf{0.05}&0.09&\bf{0.33}&0.046&\bf{0.11}&\bf{0.40}&0.056&\bf{0.098}&0.62&0.042&\bf{0.16}\\
  \midrule
  sparse LTS \cite{Alfons2013sparseLTS}&0.24&0.12&0.16&0.39&0.11&0.28&0.43&0.10&0.31&0.58&0.092&0.38 \\  
\bottomrule
\end{tabular}}
\vspace{3 pt}
\caption{ $n\cdot$ MSE, FPR and FNR of the estimators for Scenario 1, with $\epsilon$ contaminated predictors. Best results and proposed estimator are highlighted with bold font.}
\label{tab: scenario1}
\end{table*}

\begin{table*}[htbp]
\centering
\resizebox{\textwidth}{!}{\begin{tabular}{{lllllllllllll}}
Estimator &  & $\epsilon=0\%$ &  &  & $\epsilon=10\%$ &  &  & $\epsilon=20\%$ &  &  & $\epsilon=30\%$ & \\
\toprule
  & $n\cdot$MSE  & FPR  & FNR & $n\cdot$MSE  & FPR  & FNR& $n\cdot$MSE  & FPR  & FNR& $n\cdot$MSE  & FPR  & FNR \\
  \midrule
  OLS Lasso \cite{tibshirani1996regression} & 2.24&\bf{0.013}&0.98&1.88&\bf{0.019}&0.95&2.8&0.019&0.97&2.76&\bf{0.018}&0.97\\
   \midrule
  OCD Lasso\cite{Machkour2017OCD} & 1.34&0.016&0.98&1.83&0.022&0.37&2.8&\bf{0.016}&0.97&2.69&0.029&0.95\\
  \midrule
  M-Lasso \cite{ollila2016}& 1.23&0.35&0.16&1.20&0.35&0.16&1.24&0.37&0.21&1.24&0.36&0.26\\
  \midrule
  ad. M-Lasso \cite{ollila2016}&0.46&0.031&0.16&0.52&0.031&0.40&0.59&0.036&0.43&0.75&0.023&0.56\\
  \midrule
  MM-Lasso \cite{smucler2015robust}&0.29&0.17&\bf{0.15}&0.44&0.27&\bf{0.14}&0.59&0.35&\bf{0.15}&0.83&0.46&\bf{0.16}\\
  \midrule
  ad. MM-Lasso \cite{smucler2015robust}&\bf{0.21}&0.038&0.22&\bf{0.29}&0.052&0.28&0.45&0.056&0.38&0.65&0.067&0.48\\
  \midrule
  \bf{MM-RWAL} &\bf{0.21}&0.038&\bf{0.15}&\bf{0.29}&0.042&\bf{0.14}&\bf{0.44}&0.035&\bf{0.15}&0.63&0.036&\bf{0.16}\\
  \midrule
  sparse LTS \cite{Alfons2013sparseLTS}&0.23&0.11&0.17&0.32&0.10&0.22&0.45&0.097&0.32&\bf{0.56}&0.097&0.41\\
\bottomrule
\end{tabular}}
\vspace{3 pt}
\caption{ $n\cdot$MSE, FPR and FNR of the estimators for Scenario 2, with $\epsilon$ contaminated predictors. Best results and proposed estimator are highlighted with bold font.}
\label{tab: scenario2}
\end{table*}

\begin{table}[htbp]
\centering
\begin{tabular}{{ll}}
\toprule
   & ACT [s] \\
   \midrule
  OLS Lasso \cite{tibshirani1996regression}& \bf{0.02}\\
   \midrule
  OCD Lasso \cite{Machkour2017OCD} & 0.97\\
   \midrule
  M-Lasso \cite{ollila2016}& 95.00\\
  \midrule
  ad. M-Lasso \cite{ollila2016}& 97.51\\
   \midrule
  MM-Lasso \cite{smucler2015robust}& 177.63\\
  \midrule
  ad. MM-Lasso \cite{smucler2015robust}&188.97\\
  \midrule
  MM-RWAL & 191.54\\
   \midrule
  sparse LTS \cite{Alfons2013sparseLTS}& 266.79\\
\bottomrule
\end{tabular}
\vspace{3 pt}
\caption{Average Computation Time (ACT) for different estimation methods for one Monte-Carlo run of Scenario 2. Best results and proposed estimator are highlighted with bold font.}
\label{tab: comp_time}
\end{table}

\section{A Real Data Example of Source Estimation for an Atmospheric Inverse Problem}
\label{sec:real-data}
Quantifying the emissions of a pollutant into the atmosphere is essential, for example, in the case of nuclear power plant accidents, volcano eruptions, or to track the releases of greenhouse gases. In this paper, we apply penalized robust estimation to determine the temporal releases of the particles of the European Tracer Experiment (ETEX) at the source location. During the ETEX experiment tracers (perfluorocarbons) were released into the atmosphere in Monterfil, Brittany in 1994. Hourly measurements were taken at 168 ground-level sampling stations in 17 European countries as illustrated in Fig.~\ref{fig: ETEX}. 
\begin{figure}[htbp]
\centering
\Large
\centering
\includegraphics[width=0.95\linewidth]{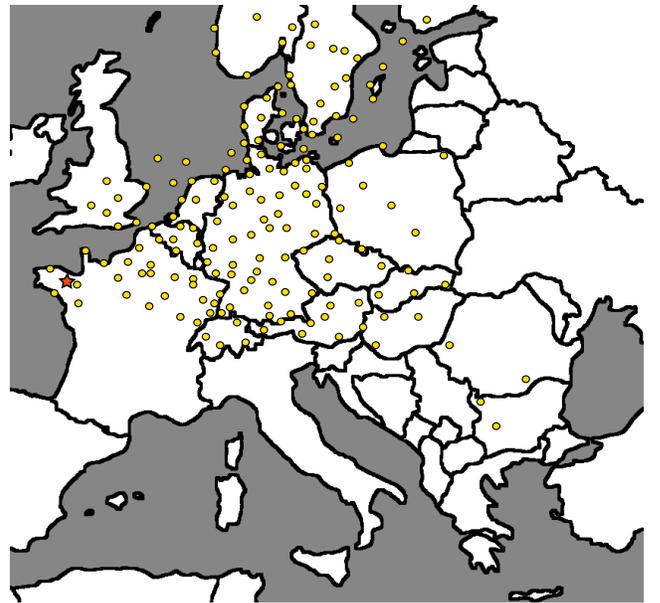}
\caption{Locations of the 168 base stations used in the European Tracer Experiment (ETEX).}
\label{fig: ETEX}
\end{figure}

Atmospheric dispersion models, such as the Lagrangian Particle Dispersion Model (LPDM) allow to formulate the source estimation problem as a linear inverse problem according to Eq.~\eqref{eqn:linear_regression_matrix} as follows. The regression matrix $\mathbf{X}$ is estimated as in \cite{martinez2014robust} by using the Flexible Particle Dispersion Model and is formed by $\mathbf{X}=(\tilde{\mathbf{X}}_1, \dots,\tilde{\mathbf{X}}_k,\ldots, \tilde{\mathbf{X}}_K)^{\top}$, where the $k$th matrix describes the distribution of the particles from the source to the $k$th sensor. Every regression parameter $\beta_j, j\in\{1,\dots,p\}$ corresponds to the amount of PFC that is released by the source at time instant $j$. The sampling interval is one hour and 120 measurements are taken at each sampling station, resulting in $p=120$ unknown regression variables. The responses $\mathbf{y}=(y_1,\dots,y_n)^{\top}=(\tilde{\mathbf{y}}_1^{\top},\dots,\tilde{\mathbf{y}}_k^{\top},\dots,\tilde{\mathbf{y}}_K^{\top})^{\top}$ are a set of stacked observations of the $K=168$ sensors.


The ETEX data is sparse in $\boldsymbol{\beta}$, since only 12 of the regression parameters, i.e. $10\%$, are unequal to zero. The residuals $\mathbf{r}=\mathbf{y}-\mathbf{X}\boldsymbol{\beta}$, given the ground truth values of $\boldsymbol{\beta}$, are non-Gaussian, as displayed by the histogram in Fig.~\ref{fig: hist_y}. Additionally, the regression matrix $\mathbf{X}$ contains outliers, as exemplified for the 68th predictor via the histogram in Fig.~\ref{fig: hist_x}.  Furthermore, $\mathbf{X}$ is sparse, since most of the time, the PFC particles do not reach a sensor, resulting in a regression matrix, which is dominated by zero-valued cells. 
\begin{figure}
\centering
\begin{minipage}{0.4\textwidth}
\includegraphics[width=\textwidth]{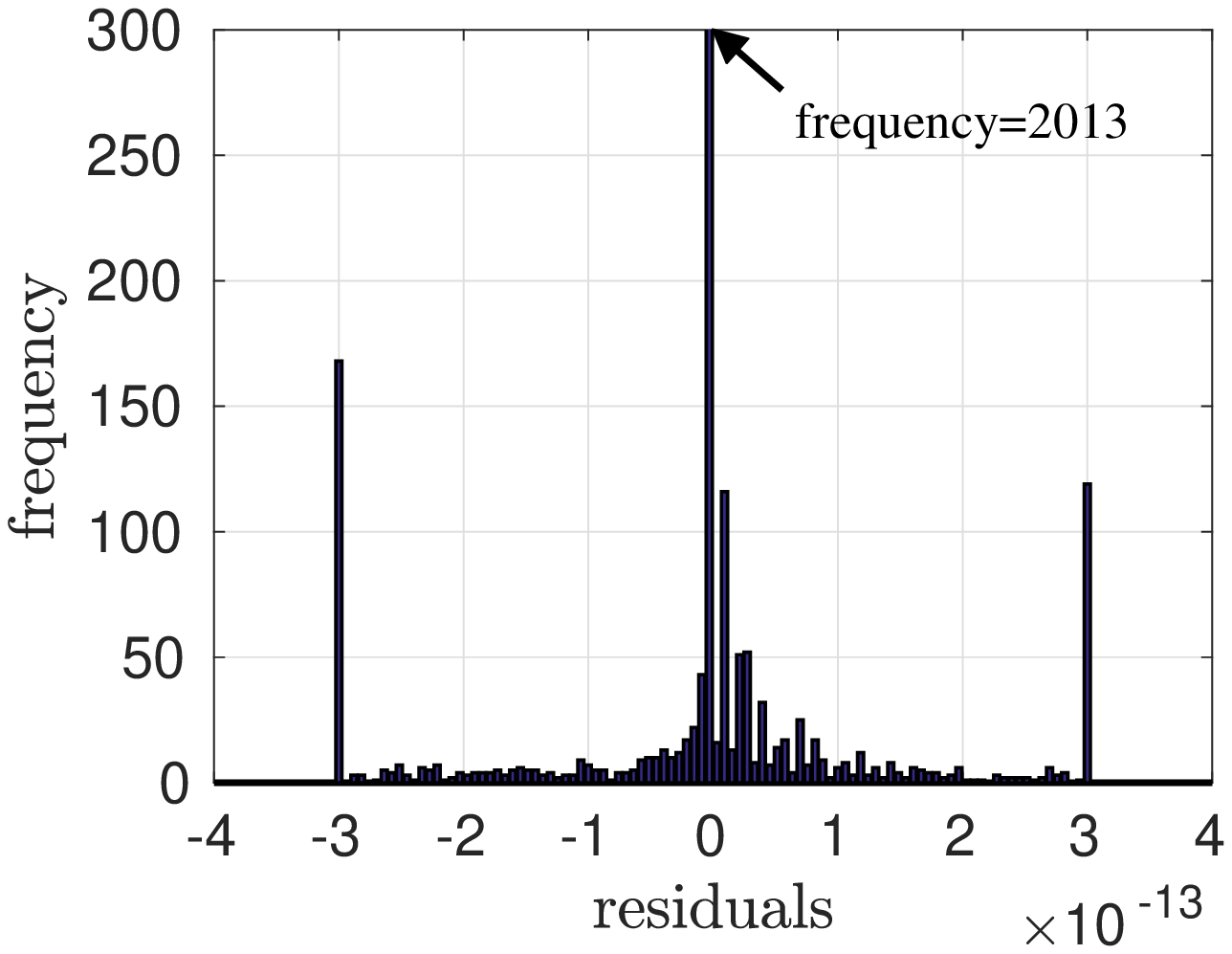}
\caption{Histogram of the residuals.}
\label{fig: hist_y}
\end{minipage}
\hfill
\begin{minipage}{0.4\textwidth}
\includegraphics[width=\textwidth]{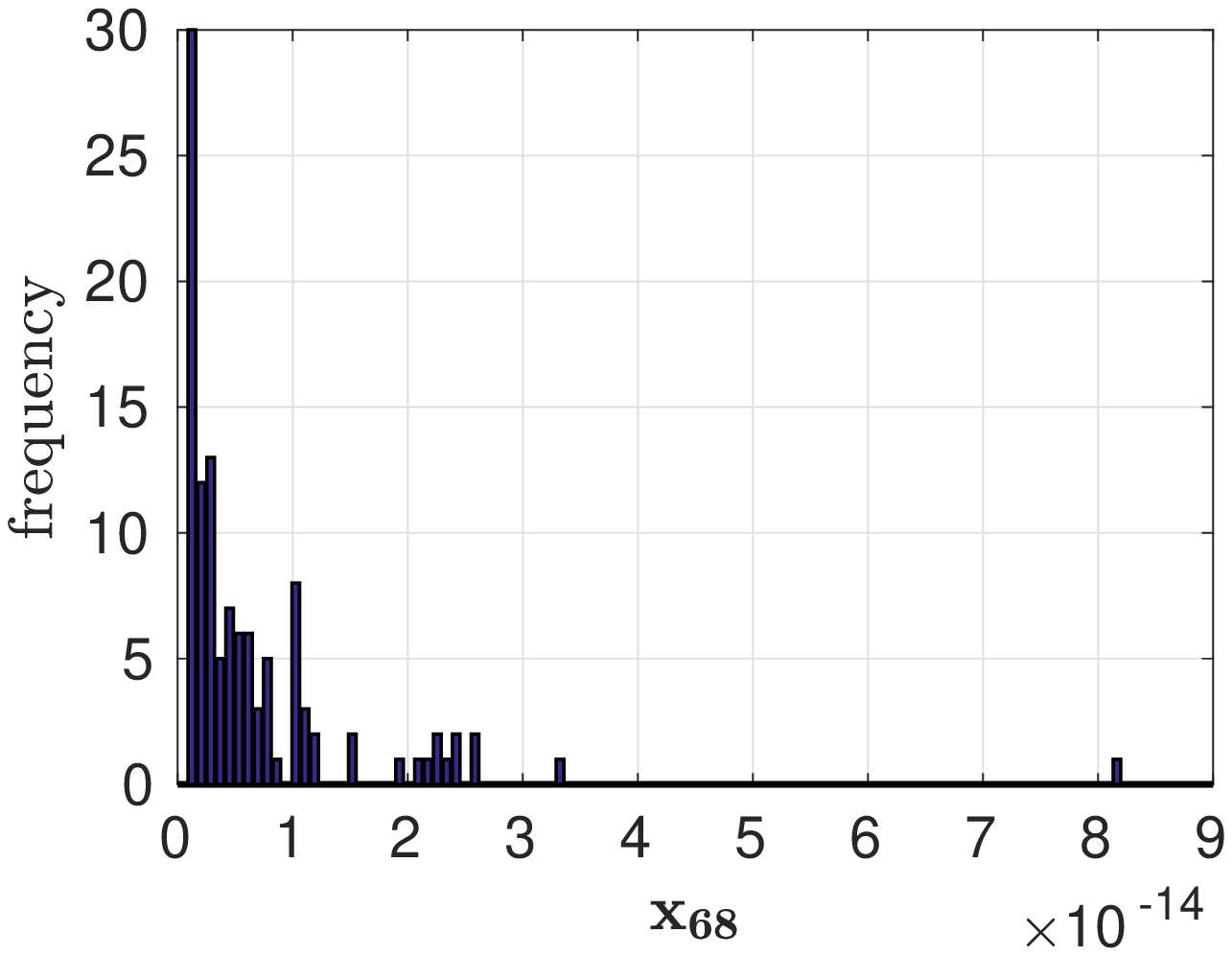}
\caption{Histogram of the 68th predictor $\mathbf{x}_{68}$ (zero bin omitted).}
\label{fig: hist_x}
\end{minipage}
\end{figure}

\subsection{Pre-Processing of the ETEX data}
The following preprocessing steps are applied to the data:
\begin{enumerate}
\item Remove data points $(\mathbf{x}_j,y_j)^\top$, where all entries of the predictor $\mathbf{x}_j$ are equal to zero.
\item Normalize the data robustly using the mad and the median,
$$\mathbf{y}\leftarrow \frac{\mathbf{y}-\mathrm{med}(\mathbf{y})}{\mathrm{mad}(\mathbf{y})}$$
$$\mathbf{x}_j\leftarrow \frac{\mathbf{x}_j-\mathrm{med}(\mathbf{x}_j)}{\mathrm{mad}(\mathbf{x}_j)}.$$
\item When applying the median or the mad to the predictors $\mathbf{x}_j$, only use samples which are greater than zero\footnote{The predictors contain an overwhelming number of components which are zero since $\mathbf{X}$ is highly sparse. Robust estimates like the median or the mad will result in a value of zero if more than $50\%$ of the entries of $\mathbf{x}_j$ are zero. Obviously, taking only the positive components into account leads to estimates that are based on the distribution of the data that we are actually interested in.}. 

\item Apply a robust PCA \cite{Croux2007PCA} and reconstruct $\mathbf{X}$ using only $N_p$ principal components, such that the mad of the $N_p$ principal components corresponds to $90\%$ of the total mad
\footnote{The reason for applying a robust PCA is to provide for all algorithms a matrix that is not as badly conditioned as the original regression matrix. The value $90\%$ has been empirically determined from a range of possible values between $85\%-99\%$.}.
\item Further, since we know that the number of particles omitted by the source can only be positive $\beta_j\geq 0$, we impose a non-negativity constraint on the parameters $\beta_j$, $j \in\{1,\dots,p\}$. 
\end{enumerate}
This leads to the positive Lasso.
\subsection{The Positive MM-RWAL Estimator}
\begin{defn} (The Positive Lasso Estimator)
\begin{align}
\hat{\boldsymbol{\beta}}_{\mathrm{pos\, Lasso}}=\underset{\boldsymbol{\beta}}{\argmin} ||\mathbf{y}-\mathbf{X}\boldsymbol{\beta}||_2^2,\;\;\nonumber \\
\text{subject to }\; ||\boldsymbol{\beta}||_1\leq t\; \mathrm{ and } \; \beta_j\geq 0 \;\;\; j\in\{1,\dots,p\}.
\end{align}
\end{defn}
The positive Lasso can be calculated using a modified cyclic coordinate descent algorithm as follows.
\begin{alg} (The Positive Lasso Estimator Using Cyclic Coordinate Descent)\\
\begin{enumerate}
\item Standardize the regressors so that $\sum_i x_{ij}/n=0$ and $\sum_i x_{ij}^2=1$.
\item Initialize the regression parameters with an arbitrary value, e.g.,
$$\beta_j=0\;\; j \in \{1,\dots,p\}.$$
\item Calculate the update of the $j$th regression parameter by keeping the parameters $k\neq j$ fixed. Start with $j=1$ and end with $j=p$. If a regression parameter is negative replace it by zero
$$\hat{\beta}_j=\max\left\lbrace \mathrm{\mathsf{S}} \left(\sum_{i=1}^n x_{ij}(y_i-\sum_{k\neq j} x_{ik} \hat{\beta_k} ),\lambda \right), 0\right\rbrace.$$
\item Repeat step 3 until convergence.
\end{enumerate}
\label{alg:postive_lasso_ccd}
\end{alg}
Here, $\mathsf{S}(x, \lambda)=\mathrm{sign}(x)(|x|-\lambda)_+$ is the soft thresholding function with $(\cdot)_+=\mathrm{max} \{\cdot,0\}$.

The LARS algorithm can be modified in a similar way, as proposed in \cite{Efron2004LARS}. 
Therewith, Algorithm \ref{alg:postive_lasso_ccd} is modified to compute the positive MM-Lasso estimates,
\begin{align*}
 \hat{\boldsymbol{\beta}}_{\mathrm{MM}}=\underset{\boldsymbol{\beta}}{\argmin}\sum_{i=1}^n \rho\left(\frac{y_i-\mathbf{x}_i\boldsymbol{\beta}}{\hat{\sigma}}\right)+\lambda \sum_{j=1}^p |\beta_j|,\;\;\\
\text{subject to }\; \beta_j\geq 0 \;\;\; j\in\{1,\dots,p\}
\end{align*}
and the extension to the positive MM-RWAL is straightforward.

\begin{figure*}[!ht]
\centering
\includegraphics[width=.35\textwidth]{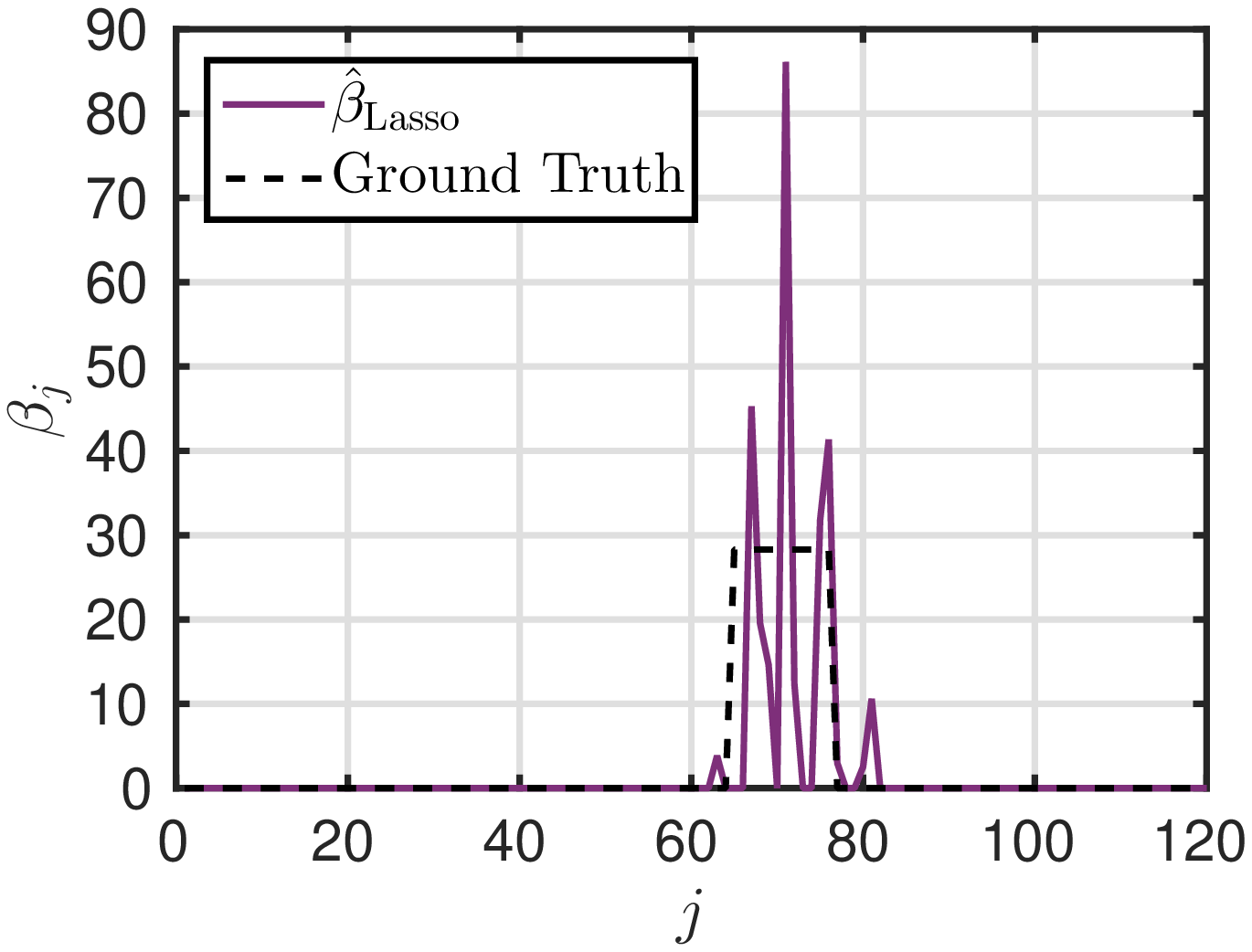}
\includegraphics[width=.35\textwidth]{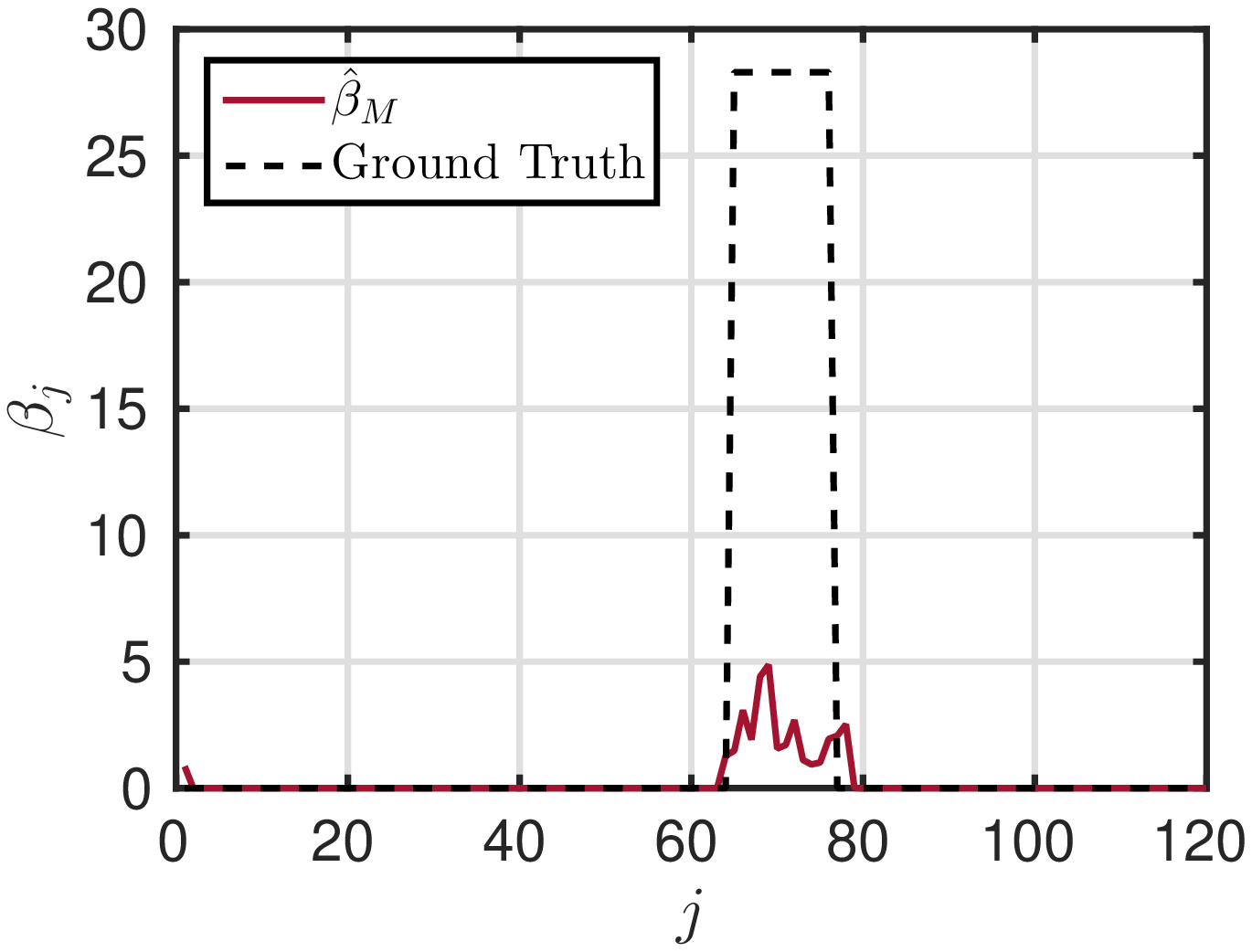}
\includegraphics[width=.35\textwidth]{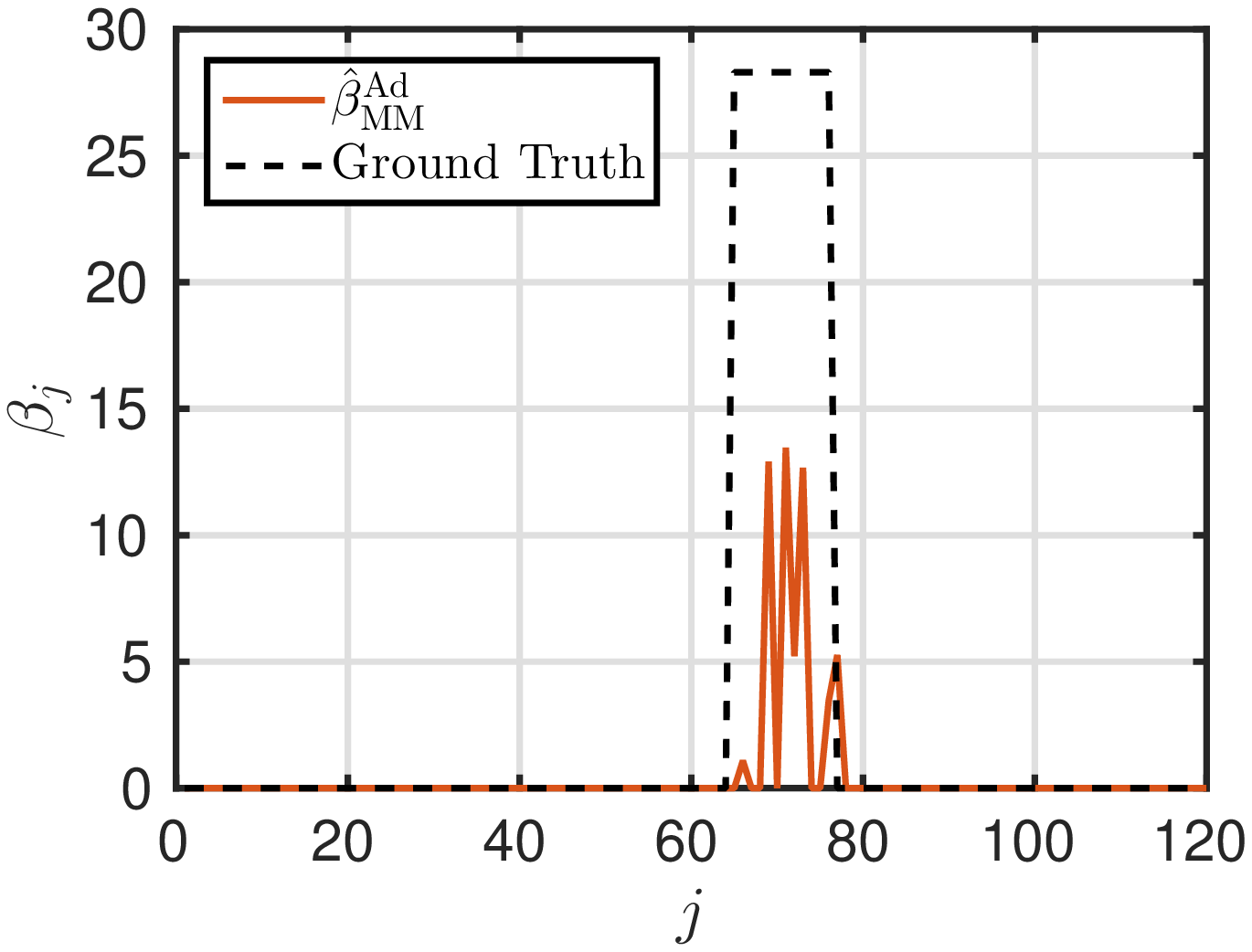}
\includegraphics[width=.35\textwidth]{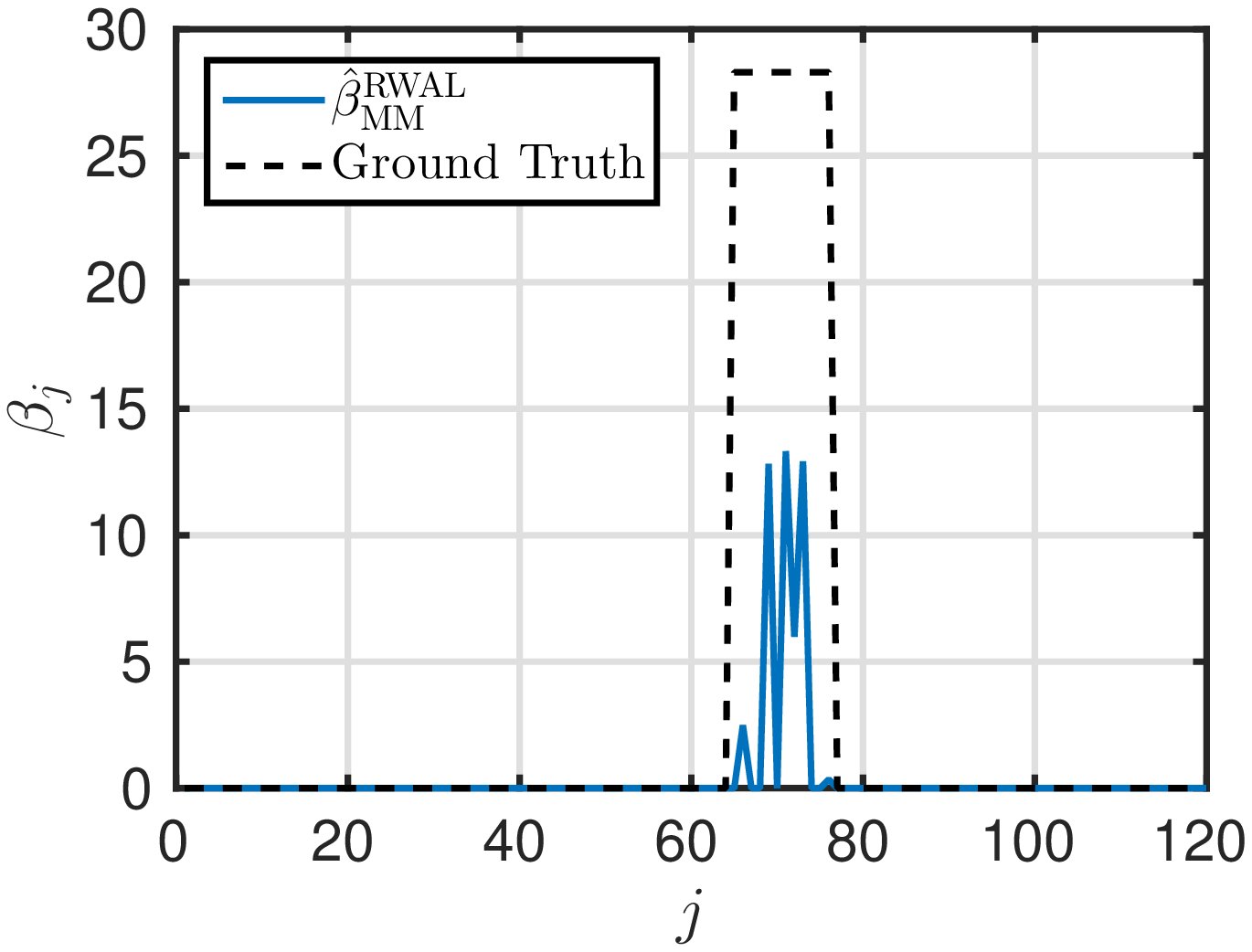}
\caption{Estimated source emissions for the Lasso, Ad. M-Lasso, Ad. MM-Lasso and MM-RWAL.}
\label{fig: MM_real_data}
\end{figure*}

\subsection{Parameter Selection and Performance Metrics}
All parameters are set as described in Section \ref{sec:simulations} and the optimal regularization parameter $\lambda^*$ is found by using the parameter with the lowest MSE compared to the ground truth:
$$\lambda^*=\underset{\lambda \in \Lambda}{\argmin} \frac{1}{p}\sum_{j=1}^p (\beta_j-\hat{\beta}(\lambda))^2$$ 
The performance metrics are the mean squared error (MSE)
\begin{equation}
\mathrm{MSE}(\hat{\boldsymbol{\beta}})=\frac{1}{p}\sum_{j=1}^p (\beta_j-\hat{\beta}_j)^2,
\end{equation}
the false positive rate (FPR)
\begin{equation}
\mathrm{FPR}(\hat{\boldsymbol{\beta}})=\dfrac{|\lbrace j\in\lbrace 1,\ldots,p\rbrace : \beta_{j}=0\land \hat{\beta}_{j}\neq 0\rbrace|}{|j\in \lbrace 1,\ldots,p\rbrace : \beta_{j}=0|}
\label{eq: FPR}
\end{equation}
and the false negative rate (FNR)
\begin{equation}
\mathrm{FNR}(\hat{\boldsymbol{\beta}})=\dfrac{|\lbrace j\in\lbrace 1,\ldots,p\rbrace : \beta_{j}\neq 0\land \hat{\beta}_{j}=0\rbrace|}{|j\in \lbrace 1,\ldots,p\rbrace : \beta_{j}\neq 0|}.
\label{eq: FNR}
\end{equation}
The false positive rate measures how many time instances are flagged as containing a source emission, while there was actually none. The false negative rate measures how many time instances containing a source emission are falsely left out.

\begin{table}
\centering
\begin{tabular}{{lllll}}
  & MSE  & FPR  & FNR  & 1-(FPR+FNR)\\
\toprule
OLS Lasso \cite{tibshirani1996regression} &  70.66 &  0.046 &  0.36 & 0.594\\
\midrule
OCD Lasso \cite{Machkour2017OCD} & $3.75 \cdot 10^4$    & {\bf 0.029} &  0.438 & 0.533 \\
\midrule
Ad. M-Lasso  \cite{ollila2016} & 68.23 &  \bf{0} & 0.25 & 0.75\\
\midrule
MM-Lasso \cite{smucler2015robust}& {\bf 59.84} &  0.039 & 0.5& 0.461 \\
\midrule
Ad. MM-Lasso \cite{smucler2015robust} & 61.88 &  0.053 & 0.143 & 0.804\\
\midrule
{\bf MM-RWAL}  & 62.1 &  0.053 & \bf{0}& \bf{0.947} \\
\bottomrule
\end{tabular}
\vspace{3 pt}
\caption{MSE, FPR and FNR of the ETEX data using different estimation methods. Best results and proposed estimator are highlighted with bold font.}
\label{tab: real_data}
\end{table}
\subsection{Results for the ETEX Experimental Data}
Table~\ref{tab: real_data} displays the MSE, FPR and FNR for the ETEX data set, while Fig.~\ref{fig: MM_real_data} shows the estimated source emissions for the Lasso, Ad. M-Lasso, Ad. MM-Lasso and MM-RWAL. In general, the difference in the MSE is not very large between all estimators, except for the OCD-Lasso. For this example, even the OLS Lasso estimate results in a reasonable estimation, mainly thanks to robust PCA.  However, the OLS Lasso estimate still has poor model selection properties. The OCD Lasso estimates are dominated by outlying predictors rendering this estimator useless for the above real-data application. The MM-RWAL is the only estimator, which correctly detects all coefficients that are equal to zero, i.e. FNR=0. The MM-RWAL also by far outperforms its competitors in correctly finding the indices of the non-zero parameters, i.e. $1-(\mathrm{FPR}+\mathrm{FNR})=0.947.$

\section{Conclusion}
\label{sec:conclusion}
The problem of finding sparse solutions to under-determined, or ill-conditioned, linear regression problems that are contaminated by cellwise and rowwise outliers was investigated. We defined 'robust oracle properties' that are required to perform robust variable selection for such models. We introduced and analyzed a robustly weighted and adaptive Lasso type regularization term and integrated it into the objective function of the MM-estimator, resulting in the proposed \textit{MM-Robust Weighted Adaptive Lasso (MM-RWAL)} for which we showed that at least the weak robust oracle properties hold. An algorithm to compute the weights was proposed and analyzed. The MM-RWAL outperformed existing robust sparse estimators in numerical experiments and proved its usefulness in a real-data application of estimating the sparse non-negative spatio-temporal emissions of a pollutant, given noisy observations and an imprecisely estimated ill-conditioned and sparse dispersion model containing cellwise and rowwise outliers. In future work, the proposed RWAL penalty can easily be integrated into the objective function of other rowwise robust estimators to extend them to the cellwise contamination framework.

\section*{Acknowledgment}
The authors would like to thank Marta Martinez-Camara and Martin Vetterli for making us aware of the ETEX experiment and for many interesting discussions on this application.

\bibliographystyle{IEEEbib}

%
%
%
%
%
%
%
%

\end{document}